\newcommand\free{\texttt{Free}}
\newcommand\rel{\texttt{Rel}}
\newcommand\parts{\texttt{Parts}}
\newcommand\var{\texttt{Var}}
\newcommand\reach{\texttt{reach}}
\newcommand\domain{\texttt{Dom}}
\newcommand\dom\domain
\newcommand{\tuple}[1]{\vec{#1}}
\newtheorem{theo}{Theorem}[section]  
\newtheorem{coro}[theo]{Corollary}
\newtheorem{propo}{Proposition}[section]
\newtheorem{defin}[theo]{Definition}
\newenvironment{proof}{\emph{Proof:}$\\$}{$\\\Box\\$}
\begin{document}
\title{Transition Semantics\\\large{The Dynamics of Dependence Logic}}
\author{Pietro Galliani\\University of Helsinki\\(pgallian@gmail.com)}
\maketitle
\begin{abstract}
We examine the relationship between Dependence Logic and game logics. A variant of Dynamic Game Logic, called \emph{Transition Logic}, is developed, and we show that its relationship with Dependence Logic is comparable to the one between First-Order Logic and Dynamic Game Logic discussed by van Benthem. 

This suggests a new perspective on the interpretation of Dependence Logic formulas, in terms of assertions about \emph{reachability} in games of imperfect information against Nature. We then capitalize on this intuition by developing expressively equivalent variants of Dependence Logic in which this interpretation is taken to the foreground. 
\end{abstract}
\section{Introduction}
\subsection{Dependence Logic}
\label{subsect:DL}
Dependence Logic \cite{vaananen07} is an extension of First-Order Logic which adds \emph{dependence atoms} of the form $=\!\!\!(t_1, \ldots, t_n)$ to it, with the intended interpretation of ``the value of the term $t_n$ is a function of the values of the terms $t_1 \ldots t_{n-1}$.''

The introduction of such atoms is roughly equivalent to the introduction of non-linear patterns of dependence and independence between variables of Branching Quantifier Logic \cite{henkin61} or Independence Friendly Logic \cite{hintikkasandu89,hintikka96,mann11}: for example, both the Branching Quantifier Logic sentence
\[
	\left(
		\begin{array}{l l}
			\forall x & \exists y\\
			\forall z & \exists w
		\end{array}
	\right)
	R(x, y, z, w)
\]
and the Independence Friendly Logic sentence
\[
	\forall x \exists y \forall z (\exists w / x, y) R(x, y, z, w)
\]
correspond in Dependence Logic to
\[
	\forall x \exists y \forall z \exists w (=\!\!(z, w) \wedge R(x,y,z,w)),
\]
in the sense that all of these expressions are equivalent to the Skolem formula
\[
	\exists f \exists g \forall x \forall z R(x, f(x), z, g(z)).
\]
As this example illustrates, the main peculiarity of Dependence Logic compared to the others above-mentioned logics lies in the fact that, in Dependence Logic, the notion of \emph{dependence and independence between variables} is explicitly separated from the notion of quantification. This makes it an eminently suitable formalism for the formal analysis of the properties of \emph{dependence itself} in a first-order setting, and some recent papers (\cite{gradel13,engstrom12,galliani12}) explore the effects of replace dependence atoms with other similar primitives such as \emph{independence atoms} \cite{gradel13}, \emph{multivalued dependence atoms} \cite{engstrom12}, or \emph{inclusion} or \emph{exclusion} atoms \cite{galliani11b,galliani12}.

Branching Quantifier Logic, Independence Friendly Logic and Dependence Logic, as well as their variants, are called \emph{logics of imperfect information}: indeed, the truth conditions of their sentences can be obtained by defining, for every model $M$ and sentence $\phi$, an imperfect-information \emph{semantic game} $G^M(\phi)$ between a \emph{Verifier} (also called Eloise) and a \emph{Falsifier} (also called Abelard), and then asserting that $\phi$ is true in $M$ if and only if the Verifier has a winning strategy in $G^M(\phi)$. As an alternative of this (non-compositional) \emph{Game-Theoretic Semantics}, which is an imperfect-information variant of Hintikka's Game-Theoretic Semantics for First Order Logic \cite{hintikka68}, Hodges introduced in \cite{hodges97} \emph{Team Semantics} (also called \emph{Trump Semantics}), a compositional semantics for logics of imperfect information which is equivalent to Game-Theoretic Semantics over sentences and in which formulas are satisfied or not satisfied not by single assignments, but by \emph{sets} of assignments (called \emph{Teams}). 

In this work, we will be mostly concerned with Team Semantics and some of its variants. We refer the reader to the relevant literature (for example to \cite{vaananen07} and \cite{mann11}) for further information regarding these logics: in the rest of this section, we will content ourselves with recalling the definitions and results which will be useful for the rest of this work. 
\begin{defin}[Assignments and substitutions]
Let $M$ be a first order model and let $V$ be a finite set of variables. Then an \emph{assignment} over $M$ with \emph{domain} $V$ is a function $s$ from $V$ to the set $\dom(M)$ of all elements of $M$.

Furthermore, for any assignment $s$ over $M$ with domain $V$, any element $m \in \dom(M)$ and any variable $v$ (not necessarily in $V$), we write $s[m/v]$ for the assignment with domain $V \cup \{v\}$ such that 
\[
	s[m/v](w) = \left\{\begin{array}{l l}
			m & \mbox{if } w = v;\\
			s(w) & \mbox{if } w \in V \backslash \{v\}
		\end{array}
	\right. 
\]
for all $w \in V \cup \{v\}$.
\end{defin}
\begin{defin}[Team]
Let $M$ be a first-order model and let $V$ be a finite set of variables. A \emph{team} $X$ over $M$ with \emph{domain} $\dom(X) = V$ is a set of assignments from $V$ to $M$. 
\end{defin}
\begin{defin}[Relations corresponding to teams]
Let $X$ be a team over $M$, and let $V$ be a finite set of variables.  and let $\tuple v$ be a finite tuple of variables in its domain. Then $X(\tuple v)$ is the relation $\{s(\tuple v) : s \in X\}$. Furthermore, we write $\rel(X)$ for $X(\domain(X))$.  
\end{defin}
As is often the case for Dependence Logic, we will assume that all our formulas are in Negation Normal Form:
\begin{defin}[Dependence Logic, Syntax]
Let $\Sigma$ be a first-order signature. Then the set of all dependence logic formula with signature $\Sigma$ is given by
\[
\phi ::= R\tuple t ~|~ \lnot R \tuple t ~|~ =\!\!(t_1, \ldots, t_n) ~|~ \phi \vee \phi ~|~ \phi \wedge \phi ~|~ \exists v \phi ~|~ \forall v \phi	
\]
where $R$ ranges over all relation symbols, $\tuple t$ ranges over all tuples of terms of the appropriate arities, $t_1 \ldots t_n$ range over all terms and $v$ ranges over the set $\var$ of all variables.  
\end{defin}
The set $\free(\phi)$ of all \emph{free variables} of a formula $\phi$ is defined precisely as in First Order Logic, with the additional condition that all variables occurring in a dependence atom are free with respect to it. 
\begin{defin}[Dependence Logic, Semantics]
\label{DL-TS}
Let $M$ be a first-order model, let $X$ be a team over it, and let $\phi$ be a Dependence Logic formula with the same signature of $M$ and with free variables in $\domain(X)$. Then we say that $X$ \emph{satisfies} $\phi$ in $M$, and we write $M \models_X \phi$, if and only if 
\begin{description}
\item[TS-lit: ] $\phi$ is a first-order literal and $M \models_s \phi$ for all $s \in X$;
\item[TS-dep: ] $\phi$ is a dependence atom $=\!\!(t_1, \ldots, t_n)$ and any two assignments $s, s' \in X$ which assign the same values to $t_1 \ldots t_{n-1}$ also assign the same value to $t_n$; 
\item[TS-$\vee$: ] $\phi$ is of the form $\psi_1 \vee \psi_2$ and there exist two teams $Y_1$ and $Y_2$ such that $X = Y_1 \cup Y_2$, $M \models_{Y_1} \psi_1$ and $M \models_{Y_2} \psi_2$;
\item[TS-$\wedge$: ] $\phi$ is of the form $\psi_1 \wedge \psi_2$, $M \models_X \psi_1$ and $M \models_X \psi_2$; 
\item[TS-$\exists$: ] $\phi$ is of the form $\exists v \psi$ and there exists a function $F: X \rightarrow \domain(M)$ such that $M \models_{X[F/v]} \psi$, where 
\[
	X[F/v] = \{s[F(s)/v] : s \in X\}
\]
\item[TS-$\forall$: ] $\phi$ is of the form $\forall v \psi$ and $M \models_{X[M/v]} \psi$, where 
\[
	X[M/v] = \{s[m/v] : s \in X, m \in \domain(M)\}.
\]
\end{description}
\end{defin}
The disjunction of Dependence Logic does not behave like the classical disjunction: for example, it is easy to see that $=\!\!(x) \vee =\!\!(x)$ is not equivalent to $=\!\!(x)$, as the former holds for the team $X = \{\{(x, 0)\}, \{(x, 1)\}\}$ and the latter does not. However, it is possible to define the classical disjunction in terms of the other connectives:
\begin{defin}[Classical Disjunction]
\label{defin:classic_or}
Let $\psi_1$ and $\psi_2$ be two Dependence Logic formulas, and let $u_1$ and $u_2$ be two variables not occurring in them. Then we write $\psi_1 \sqcup \psi_2$ as a shorthand for 
\[
	\exists u_1 \exists u_2 (=\!\!(u_1) \wedge =\!\!(u_2) \wedge ((u_1 = u_2 \wedge \psi_1) \vee (u_1 \not = u_2 \wedge \psi_2))).
\]
\end{defin}
\begin{propo}
\label{propo:classic_or}
For all formulas $\psi_1$ and $\psi_2$, all models $M$ with at least two elements\footnote{In general, we will assume through this whole work that all first-order models which we are considering have at least two elements. As one-element models are trivial, this is not a very onerous restriction.} whose signature contains that of $\psi_1$ and $\psi_2$ and all teams $X$ whose domain contains the free variables of $\psi_1$ and $\psi_2$ 
\[
	M \models_X \psi_1 \sqcup \psi_2 \Leftrightarrow M \models_X \psi_1 \mbox{ or } M \models_X \psi_2.
\]
\end{propo}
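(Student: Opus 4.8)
The plan is to unfold the abbreviation $\psi_1 \sqcup \psi_2$ of Definition~\ref{defin:classic_or} clause by clause according to Definition~\ref{DL-TS}, relying on two standard structural properties of Team Semantics that I will take as known (both are straightforward inductions on formulas and can be found in \cite{vaananen07}): the \emph{empty team property}, that $M \models_\emptyset \phi$ for every formula $\phi$; and \emph{locality}, that whether $M \models_X \phi$ holds depends only on the values the assignments of $X$ give to the free variables of $\phi$. Since $u_1$ and $u_2$ do not occur in $\psi_1$ or $\psi_2$, locality in particular gives $M \models_X \psi_i \Leftrightarrow M \models_{X'} \psi_i$ whenever $X'$ is obtained from $X$ by adding or overwriting only the columns for $u_1$ and $u_2$.

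For the implication from right to left, suppose first that $M \models_X \psi_1$. I would pick any $a \in \dom(M)$, let $F$ denote the constant function with value $a$, and put $X' = X[F/u_1][F/u_2]$; then $M \models_{X'} =\!\!(u_1)$ and $M \models_{X'} =\!\!(u_2)$ by \textbf{TS-dep}, since every assignment of $X'$ gives both $u_1$ and $u_2$ the value $a$. For the inner disjunction I would take $Y_1 = X'$ and $Y_2 = \emptyset$: then $M \models_{Y_1} u_1 = u_2$ (a first-order literal satisfied by every assignment of $Y_1$), $M \models_{Y_1} \psi_1$ by locality, and $M \models_{Y_2} u_1 \not= u_2 \wedge \psi_2$ by the empty team property; combining with the two dependence atoms via \textbf{TS-$\wedge$} and then \textbf{TS-$\exists$} gives $M \models_X \psi_1 \sqcup \psi_2$. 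If instead $M \models_X \psi_2$, the argument is symmetric, except that I must now choose two \emph{distinct} elements $a \not= b$ of $\dom(M)$ --- this is the only place where the hypothesis that $M$ has at least two elements is used --- put $X' = X[a/u_1][b/u_2]$, and take $Y_1 = \emptyset$, $Y_2 = X'$, so that $X'$ falls into the second disjunct.

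For the converse, assume $M \models_X \psi_1 \sqcup \psi_2$. Two applications of \textbf{TS-$\exists$} produce a team $X'' = X[F_1/u_1][F_2/u_2]$ with $M \models_{X''} =\!\!(u_1) \wedge =\!\!(u_2) \wedge \big( (u_1 = u_2 \wedge \psi_1) \vee (u_1 \not= u_2 \wedge \psi_2) \big)$. By \textbf{TS-dep}, the two dependence atoms force the $u_1$-column and the $u_2$-column of $X''$ to be constant, with values $a$ and $b$ respectively; by \textbf{TS-$\wedge$} and \textbf{TS-$\vee$} there are teams $Y_1, Y_2$ with $X'' = Y_1 \cup Y_2$, $M \models_{Y_1} u_1 = u_2 \wedge \psi_1$ and $M \models_{Y_2} u_1 \not= u_2 \wedge \psi_2$. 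The key observation is that one of these teams is forced to be empty: if $a = b$ then no assignment of $X''$ (hence none in $Y_2$) can satisfy the literal $u_1 \not= u_2$, so $Y_2 = \emptyset$, $Y_1 = X''$, and $M \models_{X''} \psi_1$; symmetrically, if $a \not= b$ then $Y_1 = \emptyset$ and $M \models_{X''} \psi_2$. In either case locality discards the auxiliary columns and yields $M \models_X \psi_1$ or $M \models_X \psi_2$.

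The argument is essentially routine once the two background lemmas are granted; the one genuinely load-bearing step --- and so the place on which to concentrate --- is the observation in the converse direction that constant $u_1$- and $u_2$-columns collapse the inner, team-semantic disjunction into a plain metatheoretic ``or'' between $\psi_1$ and $\psi_2$. It is also worth flagging the use of models with at least two elements, since that is precisely what makes the case $M \models_X \psi_2$ of the right-to-left implication go through.
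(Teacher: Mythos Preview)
Your proof is correct and matches the paper's own argument. The paper does not prove the proposition where it is stated, but later (in Section~3.1) it sketches exactly your reasoning: the dependence atoms force the $u_1$- and $u_2$-columns to be constants $a$ and $b$, which makes one of the two disjunct-teams empty and the other equal to $X[ab/u_1u_2]$, whence locality (since $u_1,u_2$ are fresh) yields $M \models_X \psi_1$ or $M \models_X \psi_2$; your write-up is simply a more explicit version of that sketch, including the right-to-left direction and the role of the two-element hypothesis.
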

The following four proportions are from \cite{vaananen07}:
\begin{propo}
\label{propo:emptyteam}
For all models $M$ and Dependence Logic formulas $\phi$, $M \models_\emptyset \phi$.
\end{propo}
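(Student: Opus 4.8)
The plan is to prove the statement $M \models_\emptyset \phi$ for every model $M$ and every Dependence Logic formula $\phi$ by structural induction on $\phi$, using Definition~\ref{DL-TS} directly. The key observation is that the empty team $X = \emptyset$ satisfies every clause vacuously, since each clause either quantifies universally over the (nonexistent) members of $X$, or asks for the existence of auxiliary teams that can themselves be taken to be empty.

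I would proceed through the clauses of Definition~\ref{DL-TS} one at a time. For \textbf{TS-lit}, the requirement ``$M \models_s \phi$ for all $s \in X$'' holds vacuously when $X = \emptyset$. For \textbf{TS-dep}, the condition concerns ``any two assignments $s, s' \in X$'', and there are none, so it holds vacuously. For \textbf{TS-$\vee$} with $\phi = \psi_1 \vee \psi_2$, take $Y_1 = Y_2 = \emptyset$; then $X = \emptyset = Y_1 \cup Y_2$, and by the induction hypothesis $M \models_{\emptyset} \psi_1$ and $M \models_{\emptyset} \psi_2$. For \textbf{TS-$\wedge$} with $\phi = \psi_1 \wedge \psi_2$, the induction hypothesis gives $M \models_{\emptyset} \psi_1$ and $M \models_{\emptyset} \psi_2$ directly. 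For \textbf{TS-$\exists$} with $\phi = \exists v \psi$, note that the only function $F : \emptyset \rightarrow \domain(M)$ is the empty function, and $\emptyset[F/v] = \emptyset$, so the induction hypothesis $M \models_{\emptyset} \psi$ closes the case. For \textbf{TS-$\forall$} with $\phi = \forall v \psi$, we have $\emptyset[M/v] = \{s[m/v] : s \in \emptyset, m \in \domain(M)\} = \emptyset$, and again the induction hypothesis applies.

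There is essentially no obstacle here: the proof is a routine induction, and the only point requiring the slightest care is checking that the team-manipulation operations $X[F/v]$ and $X[M/v]$ indeed send $\emptyset$ to $\emptyset$, which is immediate from their definitions. The result is sometimes called the \emph{empty team property} or \emph{downward closure at the empty team}, and it is what makes $\emptyset$ behave as a neutral object throughout the semantics.
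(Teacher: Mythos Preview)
Your proof is correct and is exactly the standard structural-induction argument. The paper itself does not give a proof of this proposition at all: it simply attributes the result to \cite{vaananen07}, where the same induction is carried out. So your approach coincides with the one in the cited source, and there is nothing to add.
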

\begin{propo}[Downwards Closure]
If $M \models_X \phi$ and $Y \subseteq X$ then $M \models_Y \psi$.
\end{propo}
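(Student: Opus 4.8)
The plan is to prove the statement by straightforward structural induction on $\phi$, following the clauses of Definition~\ref{DL-TS}. The base cases are immediate. If $\phi$ is a first-order literal, then by \textbf{TS-lit} we have $M \models_s \phi$ for every $s \in X$, hence a fortiori for every $s \in Y \subseteq X$; and if $\phi$ is a dependence atom $=\!\!(t_1, \ldots, t_n)$, then the condition of \textbf{TS-dep}—that any two assignments agreeing on $t_1 \ldots t_{n-1}$ agree on $t_n$—is inherited by any subset of $X$, since it is a universally quantified condition over pairs of members of the team.

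For the induction step, the conjunction case is trivial: by \textbf{TS-$\wedge$} we have $M \models_X \psi_1$ and $M \models_X \psi_2$, and two applications of the induction hypothesis with the same $Y$ give $M \models_Y \psi_1 \wedge \psi_2$. The universal case is almost as easy: from $Y \subseteq X$ we get $Y[M/v] \subseteq X[M/v]$, so the induction hypothesis applied to $\psi$ and the supplied team $X[M/v]$ yields $M \models_{Y[M/v]} \psi$, i.e. $M \models_Y \forall v \psi$. The two clauses that call for a small argument are disjunction and existential quantification. For $\phi = \psi_1 \vee \psi_2$, \textbf{TS-$\vee$} supplies a splitting $X = Y_1 \cup Y_2$ with $M \models_{Y_1} \psi_1$ and $M \models_{Y_2} \psi_2$; given $Y \subseteq X$, I would pass to the induced splitting $Y = (Y_1 \cap Y) \cup (Y_2 \cap Y)$ and invoke the induction hypothesis on each piece, which is legitimate because $Y_i \cap Y \subseteq Y_i$. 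For $\phi = \exists v \psi$, \textbf{TS-$\exists$} supplies $F : X \rightarrow \dom(M)$ with $M \models_{X[F/v]} \psi$; restricting $F$ to $Y$ gives $Y[F\!\restriction\! Y / v] \subseteq X[F/v]$, so the induction hypothesis yields $M \models_{Y[F\restriction Y / v]} \psi$, whence $M \models_Y \exists v \psi$.

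I do not expect any genuine obstacle: the result is a routine consequence of the shape of the satisfaction clauses, and the only points demanding the slightest care are the bookkeeping steps of restricting the splitting in the disjunction case and restricting the choice function in the existential case to the smaller team. (One should also note the evident typo in the statement: its conclusion should read $M \models_Y \phi$ rather than $M \models_Y \psi$.)
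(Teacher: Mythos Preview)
Your proof is correct and is the standard structural induction argument. The paper itself does not give a proof of this proposition at all: it simply cites it, together with the surrounding three propositions, from V\"a\"an\"anen's monograph \cite{vaananen07}. Your argument is precisely the one found there, so there is nothing to compare. (Your observation about the typo---$\psi$ should be $\phi$---is also correct.)
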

\begin{propo}[Locality]
If $M \models_X \phi$ and $X(\free(\phi)) = Y(\free(\phi))$ then $M \models_Y \phi$.
\end{propo}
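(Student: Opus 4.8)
The plan is to prove the statement by induction on the structure of $\phi$. So that the induction descends cleanly into subformulas (which may contain a quantified variable in their domain), I will in fact prove the slightly more explicit claim: for every model $M$ and all teams $X, Y$ with $\free(\phi) \subseteq \dom(X) \cap \dom(Y)$, if $X(\free(\phi)) = Y(\free(\phi))$ then $M \models_X \phi$ implies $M \models_Y \phi$ (the symmetry of the hypothesis in $X$ and $Y$ then giving the converse for free). The one elementary fact I will lean on repeatedly is: if $X(\free(\phi)) = Y(\free(\phi))$, then for every tuple of terms $t_1, \ldots, t_k$ all of whose variables occur among $\free(\phi)$, the relations $\{(s(t_1),\ldots,s(t_k)) : s \in X\}$ and $\{(s(t_1),\ldots,s(t_k)) : s \in Y\}$ coincide, since each is the image of the common relation $X(\free(\phi)) = Y(\free(\phi))$ under one and the same map. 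In particular $X(\free(\psi)) = Y(\free(\psi))$ for every subformula $\psi$ of $\phi$, and likewise for the enlarged variable sets arising under quantifiers.

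With this remark the base cases and most inductive cases are routine. For a first-order literal, $M \models_X \phi$ unfolds to ``$M \models_s \phi$ for all $s \in X$'', and $M \models_s \phi$ depends only on the values $s$ assigns to the variables occurring in $\phi$, all of which are free, so the remark transfers satisfaction to $Y$; for a dependence atom the truth condition is directly a property of the relation on the atom's term tuple, which is common to $X$ and $Y$. Conjunction follows immediately from the induction hypothesis applied to each conjunct, since $\free(\psi_i) \subseteq \free(\phi)$. The universal quantifier is handled by observing that $(X[M/v])(\free(\psi)) = (Y[M/v])(\free(\psi))$ — both are obtained from the common relation $X(\free(\phi)) = Y(\free(\phi))$ by adjoining a fresh coordinate ranging over all of $\dom(M)$ and then projecting down to $\free(\psi) \subseteq \free(\phi) \cup \{v\}$ — and then invoking the induction hypothesis on $X[M/v]$ and $Y[M/v]$. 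For a disjunction $\psi_1 \vee \psi_2$, given a witnessing split $X = Y_1 \cup Y_2$, I pull it back to $Y$ by setting $Z_i = \{s \in Y : s(\free(\phi)) \in Y_i(\free(\phi))\}$: since $Y(\free(\phi)) = X(\free(\phi)) = Y_1(\free(\phi)) \cup Y_2(\free(\phi))$, this yields $Y = Z_1 \cup Z_2$ with $Z_i(\free(\phi)) = Y_i(\free(\phi))$, hence $Z_i(\free(\psi_i)) = Y_i(\free(\psi_i))$, and the induction hypothesis gives $M \models_{Z_i} \psi_i$. The empty-team case needs no separate treatment, since $X(\free(\phi)) = Y(\free(\phi))$ forces $X$ and $Y$ to be empty together and the empty team satisfies everything by Proposition~\ref{propo:emptyteam}.

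The one step I expect to be genuinely delicate is the existential quantifier, and it is the only place where I will need Downwards Closure in addition to the induction hypothesis. Given $F : X \to \dom(M)$ with $M \models_{X[F/v]} \psi$, I fix a choice function $\sigma : Y \to X$ with $\sigma(s')(\free(\phi)) = s'(\free(\phi))$ for every $s' \in Y$ — legitimate because $s'(\free(\phi)) \in Y(\free(\phi)) = X(\free(\phi))$ — and put $G(s') = F(\sigma(s'))$. Then $s'[G(s')/v]$ and $\sigma(s')[F(\sigma(s'))/v]$ agree on all of $\free(\phi) \cup \{v\}$, which contains $\free(\psi)$, so $(Y[G/v])(\free(\psi)) \subseteq (X[F/v])(\free(\psi))$. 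The subtlety is that this inclusion can be \emph{strict}: $Y$ may realize a given $\free(\phi)$-pattern with fewer assignments than $X$, while $F$ hands out different witnesses to distinct such assignments of $X$, so $X[F/v]$ can be strictly wider on $v$ than $Y[G/v]$, and one cannot hope to make the two teams agree on $\free(\psi)$ outright. To close the gap I shrink $X[F/v]$: let $W = \{a \in X[F/v] : a(\free(\psi)) \in (Y[G/v])(\free(\psi))\}$. Then $W \subseteq X[F/v]$, so $M \models_W \psi$ by Downwards Closure, and $W(\free(\psi)) = (Y[G/v])(\free(\psi))$, so the induction hypothesis applied to $W$ and $Y[G/v]$ gives $M \models_{Y[G/v]} \psi$; hence $G$ witnesses $M \models_Y \exists v \psi$. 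Everything else is bookkeeping about domains, which the strengthened formulation keeps track of automatically.
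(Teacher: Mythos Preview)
The paper does not give its own proof of this proposition: it is one of four results quoted from V\"a\"an\"anen's book \cite{vaananen07} without argument. Your proof is a correct, self-contained structural induction and is essentially the standard argument one finds in that reference. In particular, your handling of the two nontrivial cases is right: for disjunction, pulling the split back along the $\free(\phi)$-projection gives $Z_i(\free(\phi)) = Y_i(\free(\phi))$ exactly as you claim (the $\supseteq$ direction uses $Y_i(\free(\phi)) \subseteq X(\free(\phi)) = Y(\free(\phi))$); and for the existential quantifier, the inclusion $(Y[G/v])(\free(\psi)) \subseteq (X[F/v])(\free(\psi))$ really can be strict, so invoking Downwards Closure before the induction hypothesis is the correct (and standard) move. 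There is nothing to compare against in the paper itself.
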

\begin{propo}[From Dependence Logic to $\Sigma_1^1$]
\label{DLToSigma}
Let $\phi(\tuple v)$ be a Dependence Logic formula with free variables in $\tuple v$. Then there exists a $\Sigma_1^1$ sentence $\Phi(R)$ such that 
\[
	M \models_X \phi \Leftrightarrow M \models \Phi(X(\tuple v))
\]
for all suitable models $M$ and for all nonempty teams $X$. Furthermore, in $\Phi(R)$ the symbol $R$ occurs only negatively. 
\end{propo}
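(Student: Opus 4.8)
The plan is to prove, by induction on $\phi$, the following slightly stronger statement: for every tuple $\tuple v$ of pairwise distinct variables with $\free(\phi)\subseteq\tuple v$ there is a $\Sigma_1^1$ formula $\theta_{\phi,\tuple v}(R)$, in which $R$ is a fresh $|\tuple v|$-ary relation symbol occurring only negatively, such that $M\models_X\phi$ if and only if $M\models\theta_{\phi,\tuple v}(X(\tuple v))$ for every suitable model $M$ and \emph{every} team $X$ with $\dom(X)=\tuple v$ (the empty team included). The Proposition is then the case $\Phi:=\theta_{\phi,\tuple v}$: when $\dom(X)$ strictly contains $\tuple v$, Locality lets us replace $X$ by its restriction to the variables of $\tuple v$, which has the same values of $X(\free(\phi))$ and of $X(\tuple v)$. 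As harmless preprocessing I would also rename the bound variables of $\phi$ so that they are pairwise distinct and disjoint from $\tuple v$; it is routine that this preserves team-semantic equivalence, and it guarantees that in the quantifier clauses the newly bound variable is not already in the current domain.

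For the base cases, if $\phi$ is a first-order literal I set $\theta_{\phi,\tuple v}(R):=\forall\tuple v\,(R\tuple v\rightarrow\phi)$; since $s\mapsto s(\tuple v)$ maps $X$ onto $X(\tuple v)$ and satisfaction of a literal by $s$ depends only on $s(\tuple v)$, clause TS-lit gives the equivalence, and $R$ appears negatively. If $\phi$ is a dependence atom $=\!\!(t_1,\ldots,t_n)$, I take a fresh copy $\tuple w$ of $\tuple v$ and set $\theta_{\phi,\tuple v}(R):=\forall\tuple v\,\forall\tuple w\,\bigl(R\tuple v\wedge R\tuple w\wedge\bigwedge_{i<n}t_i(\tuple v)=t_i(\tuple w)\rightarrow t_n(\tuple v)=t_n(\tuple w)\bigr)$, which directly transcribes TS-dep and again keeps every occurrence of $R$ in an antecedent. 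In both cases $\theta_{\phi,\tuple v}(\emptyset)$ is vacuously true, in accordance with Proposition \ref{propo:emptyteam}.

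For the inductive steps I would transcribe the remaining clauses of Definition \ref{DL-TS}, existentially quantifying in second order over relations naming the auxiliary teams, but expressing the link between $R$ and those relations as an \emph{implication} rather than an equivalence --- this is exactly what keeps $R$ negative. Concretely: $\theta_{\psi_1\wedge\psi_2,\tuple v}(R):=\theta_{\psi_1,\tuple v}(R)\wedge\theta_{\psi_2,\tuple v}(R)$; $\theta_{\psi_1\vee\psi_2,\tuple v}(R):=\exists R_1\exists R_2\bigl(\forall\tuple v(R\tuple v\rightarrow R_1\tuple v\vee R_2\tuple v)\wedge\theta_{\psi_1,\tuple v}(R_1)\wedge\theta_{\psi_2,\tuple v}(R_2)\bigr)$; and, for $u\notin\tuple v$, $\theta_{\exists u\psi,\tuple v}(R):=\exists S\bigl(\forall\tuple v(R\tuple v\rightarrow\exists u\,S\tuple v u)\wedge\theta_{\psi,\tuple v u}(S)\bigr)$ and $\theta_{\forall u\psi,\tuple v}(R):=\exists S\bigl(\forall\tuple v\forall u(R\tuple v\rightarrow S\tuple v u)\wedge\theta_{\psi,\tuple v u}(S)\bigr)$. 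In every case $R$ occurs only inside an antecedent, hence negatively; and since the formulas supplied by the induction hypothesis are $\Sigma_1^1$ and their second-order quantifiers can be renamed apart and pulled to the front (no Skolemisation is ever needed, because the first-order universal quantifiers are only ever applied to first-order subformulas), the result is again $\Sigma_1^1$.

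The main work --- and the only place I anticipate real difficulty --- is the correctness of the disjunction and quantifier clauses, and it is precisely here that Downwards Closure does the job that an equivalence would otherwise have had to do. In each $\Rightarrow$ direction one takes the auxiliary relation to be the relation of the witnessing team ($X[F/u](\tuple v u)$, $X(\tuple v)\times\dom(M)$, or $Y_i(\tuple v)$ as appropriate) and applies the induction hypothesis verbatim. In each $\Leftarrow$ direction the auxiliary relation $S$ (resp.\ $R_1,R_2$) need not be the relation of exactly the required team; one lets $Y$ be \emph{the} team having that relation as its value, obtains $M\models_Y\psi$ from the induction hypothesis, and then exhibits the required team as a \emph{sub}team of $Y$ --- using $\forall\tuple v(R\tuple v\rightarrow\exists u\,S\tuple v u)$ to choose a function $F\colon X\to\dom(M)$ in the $\exists$ case, using $S\supseteq X(\tuple v)\times\dom(M)$ in the $\forall$ case, and setting $Y_i:=\{s\in X:s(\tuple v)\in R_i\}$ in the $\vee$ case --- so that the conclusion follows by Downwards Closure together with the relevant clause of Definition \ref{DL-TS}. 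The interaction with the empty team needs a glance but is harmless: since $R$ always occurs negatively, $\theta_{\phi,\tuple v}(\emptyset)$ is satisfied by taking all second-order witnesses empty, which matches the fact that $M\models_\emptyset\phi$ always holds. The rest --- freshness of the renamed bound and second-order variables, and legitimacy of the auxiliary teams --- is routine bookkeeping.
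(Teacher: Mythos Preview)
The paper does not actually prove this proposition: it is listed among four results attributed to \cite{vaananen07} (``The following four proportions are from \cite{vaananen07}''), so there is no in-paper argument to compare against. Your proposal is correct and is essentially the standard inductive construction from that reference: translate each team-semantic clause into a $\Sigma_1^1$ condition on the relation $R=X(\tuple v)$, introduce fresh second-order witnesses for the auxiliary teams in the $\vee$, $\exists$, $\forall$ clauses, and --- crucially --- link $R$ to those witnesses only by an inclusion rather than an equality, recovering the exact team via Downwards Closure in the converse direction. Your handling of the bookkeeping (renaming bound variables so the extended tuple $\tuple v u$ stays injective, invoking Locality to reduce to $\dom(X)=\tuple v$, and checking the empty-team case) is appropriate, and the observation that all first-order universal quantifiers sit over first-order matrices so prenexing stays within $\Sigma_1^1$ is the right reason the translation lands where claimed.
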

As proved in \cite{kontinenv09}, there is also a converse for the last proposition:
\begin{theo}[From $\Sigma_1^1$ to Dependence Logic]
\label{SigmaToDL}
Let $\Phi(R)$ be a $\Sigma_1^1$ sentence in which $R$ occurs only negatively. Then there exists a Dependence Logic formula $\phi(\tuple v)$, where $|\tuple v|$ is the arity of $R$, such that 
\[
	M \models_X \phi \Leftrightarrow M \models \Phi(X(\tuple v))
\]
for all suitable models $M$ and for all nonempty teams $X$ whose domain contains $\tuple v$.
\end{theo}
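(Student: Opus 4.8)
The plan is to put $\Phi(R)$ into a rigid normal form and then translate it almost literally into Dependence Logic, reconstructing the Skolem functions with dependence atoms and letting the team $X$ itself play the role of the relation $R=X(\vec v)$. First I would bring $\Phi$ to Skolem normal form $\exists\vec f\,\forall\vec u\,\theta(\vec u,\vec f,R)$ with $\theta$ quantifier-free and $R$ still only negative; the relation quantifiers of a $\Sigma_1^1$ sentence can be replaced by function quantifiers since $|M|\ge 2$, and this does not disturb $R$. Next I would eliminate all but one of the negated $R$-atoms: writing $\theta$ in conjunctive normal form, a clause containing atoms $\neg R(\vec t_1),\dots,\neg R(\vec t_s)$ is processed by an auxiliary choice function selecting which $\vec t_j$ witnesses $\vec t_j\notin R$ (coding the $s$ cases again needs only $|M|\ge 2$), turning that clause into $\forall\vec z\,(\gamma(\vec u,\vec z)\vee\neg R(\vec z))$ for a fresh tuple $\vec z$ of length $|R|$ with $\gamma$ first-order and $R$-free; reusing the \emph{same} $\vec z$ for all clauses and collecting terms one obtains
\[
 \Phi(R)\ \equiv\ \exists\vec f\,\forall\vec u\,\forall\vec z\;\bigl[\alpha(\vec u,\vec z,\vec f)\ \wedge\ \bigl(\neg R(\vec z)\vee\beta(\vec u,\vec z,\vec f)\bigr)\bigr],
\]
with $\alpha,\beta$ quantifier-free and $R$-free (the tuple $\vec f$ now also carrying the choice functions), at the cost that the functions in $\vec f$ may be applied at compound or repeated argument tuples.

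The Dependence Logic formula $\phi(\vec v)$ is then built around this. Its prefix universally quantifies $\vec u,\vec z$ together with a fresh tuple of variables $\vec s_{i,p}$ for each occurrence $p$ of each Skolem function $f_i$ in $\alpha,\beta$, then existentially quantifies a variable $q_{i,p}$ for each such occurrence, each guarded by a dependence atom $=\!\!(\vec s_{i,p},q_{i,p})$; its matrix is the conjunction of (i) the first-order \emph{consistency constraints} $\vec s_{i,p}=\vec s_{i,p'}\to q_{i,p}=q_{i,p'}$ for occurrences of one and the same $f_i$, with (ii) the disjunction $\bigl(\bigvee_{i,p}\vec s_{i,p}\neq\vec a_{i,p}\bigr)\vee\bigl(\alpha^{*}\wedge(\xi\vee\beta^{*})\bigr)$, where $\vec a_{i,p}$ is the intended argument tuple of that occurrence (with nested Skolem terms already rewritten into the corresponding $q$'s, occurrences handled innermost-first), $\alpha^{*},\beta^{*}$ are $\alpha,\beta$ with every Skolem term replaced by the corresponding $q_{i,p}$, and $\xi$ is a Dependence Logic formula equivalent to the exclusion atom $\vec z\mid\vec v$ --- recall that exclusion atoms are uniformly definable in Dependence Logic \cite{galliani12,galliani11b}. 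The dependence atoms make $q_{i,p}$ depend only on $\vec s_{i,p}$, hence not on $\vec v$; and because the $\vec s_{i,p}$ range over all of $M$ independently, the consistency constraints force the existential witnesses to glue into honest functions $f_i\colon M^{k_i}\to M$, while the disjunct bundle $\bigvee_{i,p}\vec s_{i,p}\neq\vec a_{i,p}$ lets us restrict to a ``diagonal'' subteam on which $q_{i,p}$ really equals $f_i(\vec a_{i,p})$.

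The equivalence $M\models_X\phi(\vec v)\Leftrightarrow M\models\Phi(X(\vec v))$ (for nonempty $X$) is proved by following the team through $\phi$, and the step I expect to be the real obstacle is to check that translating $\neg R(\vec z)$ by the exclusion atom is faithful even though intermediate teams get split by disjunctions. The key is that the only disjunction reached before the core is the guard bundle, and there one may always pass to the diagonal subteam $Z_0$ on which every $\vec s_{i,p}=\vec a_{i,p}$ (on the complementary parts some first-order condition $\vec s_{i,p}\neq\vec a_{i,p}$ holds); since only $\forall,\exists,\wedge$ and dependence atoms were used to reach $Z_0$, we have $Z_0(\vec v)=X(\vec v)$, and $Z_0$ is essentially the product of a $(\vec u,\vec z,\text{auxiliary})$-part with $X(\vec v)$. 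On $Z_0$ the disjunction $\xi\vee\beta^{*}$ is then handled by noting, using flatness of the first-order $\beta^{*}$ and Downwards Closure, that $M\models_{Z_0}(\xi\vee\beta^{*})$ iff $M\models_{W}\xi$ for $W:=\{s\in Z_0:\neg\beta^{*}(s)\}$; and since membership in $W$ does not depend on the $\vec v$-coordinate, $W(\vec v)$ is either $\emptyset$ or all of $X(\vec v)$. In the first case $\xi$ holds on the empty team (and correspondingly $\beta$ holds everywhere, so $\neg R(\vec z)\vee\beta$ is trivial); in the second, $M\models_{W}(\vec z\mid\vec v)$ says $W(\vec z)\cap X(\vec v)=\emptyset$, i.e.\ every $(\vec u,\vec z)$ with $\neg\beta(\vec u,\vec z,\vec f)$ has $\vec z\notin X(\vec v)$ --- exactly $\forall\vec u\,\forall\vec z(\neg R(\vec z)\vee\beta)$ evaluated at $R=X(\vec v)$. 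Together with the $\alpha$-conjunct (handled by flatness, $\alpha$ being $R$-free) and the reconstruction of the Skolem functions described above, this yields both directions; the empty team is handled by Proposition~\ref{propo:emptyteam}. This is the converse of Proposition~\ref{DLToSigma}, and the only genuine uses of the hypotheses are $|M|\ge2$ in the normal-form step and the fact that $R$ occurs only negatively --- which is precisely what leaves the surviving $R$-atom negated and therefore expressible by an \emph{exclusion} gadget, keeping us inside Dependence Logic.
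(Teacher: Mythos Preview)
The paper does not prove this theorem at all; it merely states the result and attributes it to \cite{kontinenv09} (Kontinen and V\"a\"an\"anen). There is therefore no proof in the paper to compare your proposal against.

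For what it is worth, your overall strategy---bringing $\Phi(R)$ to a Skolem normal form in which a single negated $R$-atom survives, reconstructing the Skolem functions by dependence atoms, and handling the residual $\lnot R(\vec z)$ via an exclusion gadget against the team variables $\vec v$---is very much in the spirit of the argument in \cite{kontinenv09}, although that paper arranges the normal form and the final translation somewhat differently (in particular it does not pass through exclusion atoms, which are a later reformulation from \cite{galliani12}). Your proposal is plausible, but since the present paper treats the theorem as a black box there is nothing here to grade it against.
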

Because of this correspondence between Dependence Logic and Existential Second Order Logic, it is easy to see that Dependence Logic is closed under existential quantification: for all Dependence Logic formulas $\phi(\tuple v, P)$ over the signature $\Sigma \cup \{P\}$ there exists a Dependence Logic formula $\exists P \phi(\tuple v, P)$ over the signature $\Sigma$ such that
\[
	M \models_X \exists P \phi(\tuple v, P) \Leftrightarrow \exists P \mbox{ s.t. } M \models_X \phi(\tuple v, P)
\]
for all models $M$ with domain $\Sigma$ and for all teams $X$ over the free variables of $\phi$. Therefore, in the rest of this work we will add second-order existential quantifiers to the language of Dependence Logic, and we will write $\exists P \phi(\tuple v, P)$ as a shorthand for the corresponding Dependence Logic expression.
\subsection{Dynamic Game Logic} 
\label{subsect:DGL}
\emph{Game logics} are logical formalisms for reasoning about games and their properties in a very general setting. Whereas the Game Theoretic Semantics approach attempts to use game-theoretic techniques to \emph{interpret} logical systems, game logics attempt to put logic to the service of game theory, by providing a high-level language for the study of games.

They generally contain two different kinds of expressions: 
\begin{enumerate}
\item \emph{Game terms}, which are descriptions of games in terms of compositions of certain primitive \emph{atomic games}, whose interpretation is presumed fixed for any given game model;
\item \emph{Formulas}, which, in general, correspond to assertions about the abilities of players in games. 
\end{enumerate}

In this subsection, we are going to summarize the definition of a variant of Dynamic Game Logic \cite{parikh85}.\footnote{The main difference between this version and the one of Parikh's original paper lies in the absence of the iteration operator $\gamma^*$ from our formalism. In this, we follow \cite{vanbenthem03,vanbenthem08b}.} Then, in the next subsection, we will discuss a remarkable connection between First-Order Logic and Dynamic Game Logic discovered by Johan van Benthem in \cite{vanbenthem03}.\\

One of the fundamental semantic concepts of Dynamic Game Logic is the notion of \emph{forcing relation:}
\begin{defin}[Forcing Relation]
Let $S$ be a nonempty set of \emph{states}. A \emph{forcing relation} over $S$ is a set $\rho \subseteq S \times \parts(S)$, where $\parts(S)$ is the powerset of $S$.
\end{defin}
In brief, a forcing relation specifies the abilities of a player in a perfect-information game: $(s, X) \in \rho$ if and only if the player has a strategy that guarantees that, whenever the initial position of the game is $s$, the terminal position of the game will be in $X$. 

A (two-player) \emph{game} is then defined as a pair of forcing relations satisfying some axioms:
\begin{defin}[Game]
Let $S$ be a nonempty set of states. A \emph{game} over $S$ is a pair $(\rho^E, \rho^A)$ of forcing relations over $S$ satisfying the following conditions for all $i \in \{E, A\}$, all $s \in S$ and all $X, Y \subseteq S$:
\begin{description}
\item[Monotonicity: ] If $(s, X) \in \rho^i$ and $X \subseteq Y$ then $(s, Y) \in \rho^i$; 
\item[Consistency: ] If $(s, X) \in \rho^E$ and $(s, Y) \in \rho^A$ then $X \cap Y \not = \emptyset$; 
\item[Non-triviality: ] $(s, \emptyset) \not \in \rho^i$. 
\item[Determinacy: ] If $(s, X) \not \in \rho^i$ then $(s, S \backslash X) \in \rho^j$, where $j \in \{E, A\} \backslash \{i\}$.\footnote{This requirement is nothing but a formal version of Zermelo's Theorem: if one of the players cannot force the outcome of the game to belong to a set of ``winning outcomes'' $X$, this implies that the other player can force it to belong to the complement of $X$.}
\end{description}
\end{defin}
\begin{defin}[Game Model]
Let $S$ be a nonempty set of states, let $\Phi$ be a nonempty set of \emph{atomic propositions} and let $\Gamma$ be a nonempty set of \emph{atomic game symbols}. Then a \emph{game model} over $S$, $\Phi$ and $\Gamma$ is a triple $(S, \{(\rho^E_g, \rho^A_g) : g \in \Gamma\}, V)$, where $(\rho^E_g, \rho^A_g)$ is a game over $S$ for all $g \in \Gamma$ and where $V$ is a valutation function associating each $p \in \Phi$ to a subset $V(p) \subseteq S$. 
\end{defin}
The language of Dynamic Game Logic, as we already mentioned, consists of \emph{game terms}, built up from atomic games, and of \emph{formulas}, built up from atomic proposition. The connection between these two parts of the language is given by the \emph{test} operation $\phi?$, which turns any formula $\phi$ into a test game, and the \emph{diamond} operation, which combines a game term $\gamma$ and a formula $\phi$ into a new formula $\langle \gamma, i \rangle \phi$ which asserts that agent $i$ can guarantee that the game $\gamma$ will end in a state satisfying $\phi$.
\begin{defin}[Dynamic Game Logic - Syntax]
Let $\Phi$ be a nonempty set of \emph{atomic propositions} and let $\Gamma$ be a nonempty set of \emph{atomic game formulas}. Then the sets of all game terms $\gamma$ and formulas $\phi$ are defined as 
\begin{eqnarray*}
\gamma & ::= & g ~|~ \phi? ~|~ \gamma;\gamma ~|~ \gamma \cup \gamma ~|~ \gamma^d\\
\phi & ::= & \bot ~|~ p ~|~ \lnot \phi ~|~ \phi \vee \phi ~|~ \langle \gamma, i \rangle \phi
\end{eqnarray*}
for $p$ ranging over $\Phi$, $g$ ranging over $\Gamma$, and $i$ ranging over $\{E, A\}$.
\end{defin}
We already mentioned the intended interpretations of the test connective $\phi?$ and of the diamond connective $\langle \gamma, i \rangle \phi$. The interpretations of the other game connectives should be clear: $\gamma^d$ is obtained by swapping the roles of the players in $\gamma$, $\gamma_1 \cup \gamma_2$ is a game in which the existential player $E$ chooses whether to play $\gamma_1$ or $\gamma_2$, and $\gamma_1;\gamma_2$ is the \emph{concatenation} of the two games corresponding to $\gamma_1$ and $\gamma_2$ respectively. 
\begin{defin}[Dynamic Game Logic - Semantics]
Let $G = (S, \{(\rho^E_g, \rho^A_g) : g \in \Gamma\}, V)$ be a game model over $S$, $\Gamma$ and $\Phi$. Then for all game terms $\gamma$ and all formulas $\phi$ of Dynamic Game Logic over $\Gamma$ and $\Phi$ we define a game $\|\gamma\|_G$ and a set $\|\phi\|_G \subseteq S$ as follows: 
\begin{description}
\item[DGL-atomic-game: ] For all $g \in \Gamma$, $\|g\|_G = (\rho^E_g, \rho^A_g)$;
\item[DGL-test: ]For all formulas $\phi$, $\|\phi?\|_G = (\rho^E, \rho^A)$, where 
\begin{itemize}
\item $s \rho^E X$ iff $s \in \|\phi\|_G$ and $s \in X$;\\
\item $s \rho^A X$ iff $s \not \in \|\phi\|_G$ or $s \in X$
\end{itemize}
for all $s \in S$ and all $X$ with $\emptyset \not = X \subseteq S$; 
\item[DGL-concat: ] For all game terms $\gamma_1$ and $\gamma_2$, $\|\gamma_1;\gamma_2\|_G = (\rho^E, \rho^A)$, where, for all $i \in \{E, A\}$ and for $\|\gamma_1\|_G = (\rho^E_1, \rho^A_1)$, $\|\gamma_2\|_G = (\rho^E_2, \rho^A_2)$, 
\begin{itemize}
\item $s \rho^i X$ if and only if there exists a $Z$ such that $s \rho^i_1 Z$ and for each $z \in Z$ there exists a set $X_z$ satisfying $z \rho^i_2 X_z$ such that
\[
	X = \bigcup_{z \in Z} X_z; 
\]
\end{itemize}
\item[DGL-$\cup$: ] For all game terms $\gamma_1$ and $\gamma_2$, $\|\gamma_1 \cup \gamma_2\|_G = (\rho^E, \rho^A)$, where 
\begin{itemize}
\item $s \rho^E X$ if and only if $s \rho^E_1 X$ or $s \rho^E_2 X$, and 
\item $s \rho^A X$ if and only if $s \rho^A_1 X$ and $s \rho^A_2 X$
\end{itemize}
where, as before, $\|\gamma_1\|_G = (\rho^E_1, \rho^A_1)$ and $\|\gamma_2\|_G = (\rho^E_2, \rho^A_2)$;\footnote{\cite{vanbenthem08b} gives the following alternative condition for the powers of the universal player: 
\begin{itemize}
\item $s \rho^A X$ if and only if $X = Z_1 \cup Z_2$ for two $Z_1$ and $Z_2$ such that $s \rho^A_1 Z_1$ and $s \rho^A_2 Z_2$. 
\end{itemize}
It is trivial to see that, if our games satisfy the monotonicity condition, this rules is equivalent to the one we presented.}
\item[DGL-dual: ] If $\|\gamma\|_G = (\rho^E, \rho^A)$ then $\|\gamma^d\|_G = (\rho^A, \rho^E)$; 
\item[DGL-$\bot$: ] $\|\bot\|_G = \emptyset$; 
\item[DGL-atomic-pr: ] $\|p\|_G = V(p)$; 
\item[DGL-$\lnot$: ] $\|\lnot \phi \|_G = S \backslash \|\phi\|_G$; 
\item[DGL-$\vee$: ] $\|\phi_1 \vee \phi_2\|_G = \|\phi_1\|_G \cup \|\phi_2\|_G$;
\item[DGL-$\diamond$: ] If $\|\gamma\|_G = (\rho^E, \rho^A)$ then for all $\phi$, 
\[
	\|\langle \gamma, i \rangle \phi\|_G = \{s \in S : \exists X_s \subseteq \|\phi\|_G \mbox{ s.t. } s \rho^i X_s\}.
\]
\end{description}
If $s \in \|\phi\|_G$, we say that $\phi$ is \emph{satisfied} by $s$ in $G$ and we write $M \models_s \phi$. 
\end{defin}
We will not discuss here the properties of this logic, or the vast amount of variants and extensions of it which have been developed and studied. It is worth pointing out, however, that \cite{vanbenthem08b} introduced a \emph{Concurrent Dynamic Game Logic} that can be considered one of the main sources of inspiration for the Transition Logic that we will develop in Subsection \ref{subsect:TDL}.
\subsection{Dynamic Game Logic and First Order Logic}
\label{subsect:DGL-Rep}
In this subsection, we will briefly recall a remarkable result from \cite{vanbenthem03} which establishes a connection between Dynamic Game Logic and First-Order Logic. 

In brief, as the following two theorems demonstrate, either of these logics can be seen as a special case of the other, in the sense that models and formulas of the one can be uniformly translated into models of the other in a way which preserves satisfiability and truth:
\begin{theo}
\label{theo:repFO1}
Let $G = (S, \{(\rho^E_g, \rho^A_g) : g \in \Gamma\}, V)$ be any game model, let $\phi$ be any game formula for the same language, and let $s \in S$. Then it is possible to uniformly construct a first-order model $G^{FO}$, a first-order formula $\phi^{FO}$ and an assignment $s^{FO}$ of $G^{FO}$ such that 
\[
	G \models_s \phi \Leftrightarrow G^{FO} \models_{s^{FO}} \phi^{FO}.
\]
\end{theo}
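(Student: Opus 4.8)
The plan is to encode the game model $G$ directly as a (one-sorted) first-order relational structure $G^{FO}$ and then to translate $\phi$ by a recursion in the style of a weakest-precondition calculus, following clause by clause the compositional definition of the Dynamic Game Logic semantics. For the model I would take $G^{FO}$ to have domain $S \cup \parts(S)$ (with $S$ and $\parts(S)$ made disjoint), a unary predicate $\mathrm{St}$ holding of exactly the elements of $S$, a binary predicate $\varepsilon$ with $G^{FO} \models \varepsilon(A, s)$ iff $s \in A$, for every $g \in \Gamma$ and every $i \in \{E, A\}$ a binary predicate $F^i_g$ with $G^{FO} \models F^i_g(s, A)$ iff $(s, A) \in \rho^i_g$, and for every $p \in \Phi$ a unary predicate $P$ with $G^{FO}\models P(s)$ iff $s \in V(p)$. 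Fixing a variable $x$, the assignment $s^{FO}$ sends $x$ to $s$. Since $\varepsilon$ and the $F^i_g$ come out false whenever one of their arguments is of the wrong kind, the state- and set-quantifiers introduced below are relativised automatically by the atoms that guard them, so no separate relativisation is needed except at the one point noted below where I insert an explicit $\mathrm{St}$.

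Next I would define, by simultaneous recursion on game terms and formulas (which are mutually recursive through the test operator): a first-order formula $\chi^{FO}$ with single free variable $x$ for each game formula $\chi$; and, for each game term $\gamma$ and each player $i$, an operation $\mathrm{wp}^i_\gamma[\cdot]$ that sends a first-order formula $\psi$ with single free variable $y$ to a first-order formula $\mathrm{wp}^i_\gamma[\psi]$ with single free variable $x$, intended to express ``player $i$ can force the game $\|\gamma\|_G$ begun at $x$ to terminate in a state satisfying $\psi$''. All substitutions below are capture-avoiding, bound variables being renamed as needed to preserve the one-free-variable invariant:
\begin{itemize}
\item $\mathrm{wp}^i_g[\psi] := \exists w\,\bigl(F^i_g(x, w) \wedge \forall y\,(\varepsilon(w, y) \to \psi)\bigr)$ for atomic $g$;
\item $\mathrm{wp}^E_{\chi?}[\psi] := \chi^{FO} \wedge \psi[x/y]$ and $\mathrm{wp}^A_{\chi?}[\psi] := \bigl(\exists y\,(\mathrm{St}(y) \wedge \psi)\bigr) \wedge \bigl(\lnot\chi^{FO} \vee \psi[x/y]\bigr)$;
\item $\mathrm{wp}^i_{\gamma_1;\gamma_2}[\psi] := \mathrm{wp}^i_{\gamma_1}[\,\mathrm{wp}^i_{\gamma_2}[\psi]\,[x/y]\,]$;
\item $\mathrm{wp}^E_{\gamma_1 \cup \gamma_2}[\psi] := \mathrm{wp}^E_{\gamma_1}[\psi] \vee \mathrm{wp}^E_{\gamma_2}[\psi]$ and $\mathrm{wp}^A_{\gamma_1 \cup \gamma_2}[\psi] := \mathrm{wp}^A_{\gamma_1}[\psi] \wedge \mathrm{wp}^A_{\gamma_2}[\psi]$;
\item $\mathrm{wp}^E_{\gamma^d}[\psi] := \mathrm{wp}^A_\gamma[\psi]$ and $\mathrm{wp}^A_{\gamma^d}[\psi] := \mathrm{wp}^E_\gamma[\psi]$;
\end{itemize}
and on the formula side $\bot^{FO} := (x \neq x)$, $p^{FO} := P(x)$, $(\lnot\chi)^{FO} := \lnot\chi^{FO}$, $(\chi_1 \vee \chi_2)^{FO} := \chi_1^{FO} \vee \chi_2^{FO}$, and $(\langle\gamma, i\rangle\chi)^{FO} := \mathrm{wp}^i_\gamma[\chi^{FO}[y/x]]$. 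The construction is clearly uniform: $G^{FO}$ depends only on $G$, the translations of formulas and of game terms are purely syntactic, and $s^{FO}$ depends only on $s$.

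The correctness proof is then a single induction on the shared structure of game terms and formulas, establishing simultaneously that $\|\chi\|_G = \{s \in S : G^{FO} \models_{x \mapsto s} \chi^{FO}\}$ for every game formula $\chi$, and that for every game term $\gamma$, every player $i$ and \emph{every} first-order $\psi$ with free variable $y$, writing $\|\psi\|$ for its $y$-extension in $G^{FO}$, one has $G^{FO}\models_{x\mapsto s}\mathrm{wp}^i_\gamma[\psi]$ if and only if there is a (necessarily nonempty) $X \subseteq \|\psi\|$ with $(s, X) \in \rho^i_{\|\gamma\|_G}$. Each clause reduces to a direct unfolding of the matching semantic clause --- DGL-atomic-game against the definitions of $F^i_g$ and $\varepsilon$, then DGL-test, DGL-concat, DGL-dual, DGL-$\bot$, DGL-atomic-pr, DGL-$\lnot$, DGL-$\vee$ and DGL-$\diamond$ --- and the only place a game axiom is genuinely invoked is the universal clause for $\gamma_1 \cup \gamma_2$: DGL-$\cup$ there demands a \emph{single} set $X \subseteq \|\psi\|$ witnessing both $(s, X) \in \rho^A_{\|\gamma_1\|_G}$ and $(s, X) \in \rho^A_{\|\gamma_2\|_G}$, whereas the conjunction $\mathrm{wp}^A_{\gamma_1}[\psi] \wedge \mathrm{wp}^A_{\gamma_2}[\psi]$ only supplies possibly distinct witnesses $X_1, X_2$, which one reconciles by passing to $X_1 \cup X_2$ and appealing to \textbf{Monotonicity} (exactly the equivalence already recorded in the footnote to DGL-$\cup$); thus only Monotonicity and the fact that the semantics never puts $(s, \emptyset)$ into a forcing relation (Non-triviality) are used, while Consistency and Determinacy play no role in this direction.

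I expect there to be two delicate points, neither of them deep. First, the bookkeeping around the ``formula with a distinguished free variable'' substitutions in the concatenation and diamond clauses: one must fix the substitution conventions carefully, maintain the single-free-variable invariant, and rename bound variables to avoid capture all through the induction. Second, the edge case $\|\psi\| = \emptyset$: because no player can force the outcome into an empty goal set, $\{s : G^{FO}\models_{x\mapsto s}\mathrm{wp}^i_\gamma[\psi]\}$ must be empty whenever $\|\psi\|$ is empty --- this is automatic for every clause \emph{except} the universal test clause, where the naive formula $\lnot\chi^{FO} \vee \psi[x/y]$ would be wrongly satisfiable, and the nonemptiness guard $\exists y\,(\mathrm{St}(y) \wedge \psi)$ built into the clause above is precisely what repairs it. Everything else is forced by the definitions.
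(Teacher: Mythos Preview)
The paper does not actually prove this theorem: immediately after stating Theorems \ref{theo:repFO1} and \ref{theo:repFO2} it says ``We will not discuss here the proofs of these two results,'' referring the reader to van Benthem's original paper. So there is no in-paper proof to compare against.

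That said, your proposal is a sound and standard way to carry out the argument. Encoding $G$ as a two-layered first-order structure with states and state-sets as elements, and translating game terms by a weakest-precondition operator $\mathrm{wp}^i_\gamma[\cdot]$, matches exactly the compositional shape of the DGL semantics; the simultaneous induction you set up is the right one, and your identification of Monotonicity (for the $A$-clause of $\cup$) and Non-triviality (to rule out empty witnesses) as the only game axioms needed in this direction is accurate. The nonemptiness guard in the $A$-test clause is the right fix for the edge case you flag. One small notational slip: in the concatenation clause you write $[x/y]$ where, to be consistent with the convention you use in the diamond clause ($\chi^{FO}[y/x]$ turning a formula in $x$ into one in $y$), you presumably mean $[y/x]$; since you already call out the substitution bookkeeping as a delicate point, this is harmless, but worth tidying.
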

\begin{theo}
\label{theo:repFO2}
Let $M$ be any first order model, let $\phi$ be any first-order formula for the signature of $M$, and let $s$ be an assignment of $M$. Then it is possible to uniformly construct a game model $G^{DGL}$, a game formula $\phi^{DGL}$ and a state $s^{DGL}$ such that 
\[
	M \models_s \phi \Leftrightarrow G^{DGL} \models_{s^{DGL}} \phi^{DGL}.
\]
\end{theo}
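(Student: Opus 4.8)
The plan is to realise assignments of $M$ directly as the states of a game model, to realise each first-order quantifier as a single ``choose a value for $v$'' atomic game, and to let the diamond operator $\langle\cdot,i\rangle$ supply the quantifier pattern: $E$-diamonds for $\exists$ and $A$-diamonds for $\forall$. Concretely, let $V$ be the finite set of all variables occurring in $\phi$ together with $\dom(s)$, and let $S$ be the set of all assignments over $M$ with domain $V$. For each $v\in V$ introduce an atomic game symbol $g_v$ and put
\[
\rho^E_{g_v}=\{(t,X):\exists m\in\dom(M),\ t[m/v]\in X\},\qquad
\rho^A_{g_v}=\{(t,X):\forall m\in\dom(M),\ t[m/v]\in X\}.
\]
For every first-order atomic formula $\alpha$ with variables among $V$ take an atomic proposition $p_\alpha$ with $V(p_\alpha)=\{t\in S:M\models_t\alpha\}$. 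This gives a game model $G^{DGL}=(S,\{(\rho^E_{g_v},\rho^A_{g_v}):v\in V\},V)$; for the state take $s^{DGL}$ to be any extension of $s$ to domain $V$. Finally define the translation $(\cdot)^{DGL}$ by recursion: $\alpha^{DGL}=p_\alpha$ on atoms, $(\lnot\psi)^{DGL}=\lnot\psi^{DGL}$, $(\psi_1\vee\psi_2)^{DGL}=\psi_1^{DGL}\vee\psi_2^{DGL}$, $(\exists v\,\psi)^{DGL}=\langle g_v,E\rangle\psi^{DGL}$ and $(\forall v\,\psi)^{DGL}=\langle g_v,A\rangle\psi^{DGL}$ (the remaining connectives, if present, being handled by the usual abbreviations).

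First I would check that each pair $(\rho^E_{g_v},\rho^A_{g_v})$ is a game in the sense of the Game definition. Monotonicity is immediate from the shape of the two sets; consistency and non-triviality follow because $\dom(M)$ is nonempty (this is the only place the standing nonemptiness assumption on models is needed, and $|\dom(M)|\ge 2$ is not required here); and determinacy holds because ``$\exists m:\,t[m/v]\in X$'' fails precisely when ``$\forall m:\,t[m/v]\in S\setminus X$'', and symmetrically with the roles of $E$ and $A$ exchanged. Next I would prove, by induction on $\psi$, that $\|\psi^{DGL}\|_{G^{DGL}}=\{t\in S:M\models_t\psi\}$. The atomic, $\lnot$ and $\vee$ cases are immediate from DGL-atomic-pr, DGL-$\lnot$, DGL-$\vee$ and the matching first-order clauses. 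For $\exists v\,\psi$, rule DGL-$\diamond$ gives $t\in\|\langle g_v,E\rangle\psi^{DGL}\|$ iff there is $X_t\subseteq\|\psi^{DGL}\|$ with $t\,\rho^E_{g_v}\,X_t$, i.e. iff $t[m/v]\in\|\psi^{DGL}\|$ for some $m\in\dom(M)$, which by the induction hypothesis says $M\models_{t[m/v]}\psi$ for some $m$, i.e. $M\models_t\exists v\,\psi$. The $\forall$ case is dual: take $X_t=\{t[m/v]:m\in\dom(M)\}$ and use monotonicity of forcing relations to pass between the set-containment ``$X_t\subseteq\|\psi^{DGL}\|$'' and the pointwise statement ``$t[m/v]\in\|\psi^{DGL}\|$ for all $m$''. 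Applying the claim with $\psi=\phi$ and $t=s^{DGL}$, and observing that $M\models_{s^{DGL}}\phi\iff M\models_s\phi$ because $s^{DGL}$ and $s$ agree on $\free(\phi)$, yields the theorem, and the whole construction is uniform in $M$, $\phi$ and $s$ by inspection.

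I do not expect a genuine obstacle here. The only points that need care are the verification of the determinacy and non-triviality axioms for the atomic games $g_v$, and the quantifier step of the induction, where the second-order ``$\exists X_t\subseteq\|\psi^{DGL}\|$'' quantifier hidden inside DGL-$\diamond$ has to be collapsed to the first-order quantifier — this is exactly the step that uses monotonicity. A secondary bookkeeping nuisance is fixing a common domain $V$ for all states and extending $s$ to $s^{DGL}$; this is dispatched by the standard locality property of first-order satisfaction.
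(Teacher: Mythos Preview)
Your construction is correct and complete: the verification of the game axioms for $(\rho^E_{g_v},\rho^A_{g_v})$ goes through exactly as you describe, and the induction on $\psi$ is sound, including the collapse of the second-order $\exists X_t$ in DGL-$\diamond$ to a first-order quantifier via monotonicity. Note, however, that the paper does not actually give its own proof of this theorem; it quotes the result from van Benthem's \emph{Logic Games are Complete for Game Logics} and explicitly says ``We will not discuss here the proofs of these two results,'' so there is nothing in the paper to compare against. Your argument is the natural one and matches the spirit of the cited source.
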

We will not discuss here the proofs of these two results. Their \emph{significance}, however, is something about which is necessary to spend a few words. In brief, what this back-and-forth representation between First Order Logic and Dynamic Game Logic tells us is that it is possible to understand First Order Logic as a \emph{logic for reasoning about determined games}!

In the next sections, we will attempt to develop a similar result for the case of Dependence Logic.
\section{Transition Logic}
\subsection{A Logic for Imperfect Information Games Against Nature}
\label{subsect:TL}
We will now define a variant of Dynamic Game Logic, which we will call \emph{Transition Logic}. It deviates from the basic framework of Dynamic Game Logic in two fundamental ways: 
\begin{enumerate}
\item It considers \emph{one-player} games against Nature, instead of \emph{two-player games} as is usual in Dynamic Game Logic; 
\item It allows for \emph{uncertainty} about the initial position of the game. 
\end{enumerate}
Hence, Transition Logic can be seen as a \emph{decision-theoretic logic}, rather than a \emph{game-theoretic} one: Transition Logic formulas, as we will see, correspond to assertions about the abilities of a single agent acting under uncertainty, instead of assertions about the abilities of agents interacting with each other.

In principle, it is certainly possible to generalize the approach discussed here to multiple agents acting in situations of imperfect information, and doing so might cause interesting phenomena to surface; but for the time being, we will content ourselves with developing this formalism and discussing its connection with Dependence Logic.

Our first definition is a fairly straightforward generalization of the concept of forcing relation:
\begin{defin}[Transition system]
Let $S$ be a nonempty set of \emph{states}. A \emph{transition system} over $S$ is a nonempty relation $\theta \subseteq \parts(S) \times \parts(S)$ satisfying the following requirements: 
\begin{description}
\item[Downwards Closure: ] If $(X, Y) \in \theta$ and $X' \subseteq X$ then $(X', Y) \in \theta$;
\item[Monotonicity: ] If $(X, Y) \in \theta$ and $Y \subseteq Y'$ then $(X, Y') \in \theta$;
\item[Non-creation: ] $(\emptyset, Y) \in \theta$ for all $Y \subseteq S$; 
\item[Non-triviality: ] If $X \not = \emptyset$ then $(X, \emptyset) \not \in \theta$.
\end{description}
\end{defin}
Informally speaking, a transition system specifies the abilities of an agent: for all $X, Y \subseteq S$ such that $(X, Y) \in \theta$, the agent has a strategy which guarantees that the output of the transition will be in $Y$ whenever the input of the transition is in $X$. 

The four axioms which we gave capture precisely this intended meaning, as we will see:
\begin{defin}[Decision Game]
A \emph{decision game} is a triple $\Gamma = (S, E, O)$, where $S$ is a nonempty set of \emph{states}, $E$ is a nonempty set of possible \emph{decisions} for our agent and $O$ is an \emph{outcome function} from $S \times E$ to $\parts(S)$. 

If $s' \in O(s, e)$, we say that $s'$ is a \emph{possible outcome} of $s$ under $e$; if $O(s, e) = \emptyset$, we say that $e$ \emph{fails} on input $s$. 
\end{defin}
\begin{defin}[Abilities in a decision game]
Let $\Gamma = (S, E, O)$ be a decision game, and let $X, Y \subseteq S$. Then we say that $\Gamma$ \emph{allows} the transition $X \rightarrow Y$, and we write $\Gamma: X \rightarrow Y$, if and only if there exists a $e \in E$ such that $\emptyset \not = O(s, e) \subseteq Y$ for all $s \in X$ (that is, if and only if our agent can make a decision which guarantees that the outcome will be in $Y$ whenever the input is in $X$).
\end{defin}
\begin{theo}[Transition Systems and Abilities]
A set $\theta \subseteq \parts(S) \times \parts(S)$ is a transition system if and only if there exists a decision game $\Gamma = (S, E, O)$ such that 
\[
	(X, Y) \in \theta \Leftrightarrow \Gamma:X \rightarrow Y.
\]
\end{theo}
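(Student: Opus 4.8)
The plan is to prove the two implications separately. The direction ``if there is such a $\Gamma$, then $\theta$ is a transition system'' is a direct verification of the four axioms. Given a decision game $\Gamma = (S, E, O)$ and $\theta = \{(X,Y) : \Gamma : X \rightarrow Y\}$, Downwards Closure and Monotonicity are immediate: a decision $e$ with $\emptyset \not= O(s,e) \subseteq Y$ for all $s \in X$ also witnesses $\Gamma : X' \rightarrow Y$ whenever $X' \subseteq X$, and $\Gamma : X \rightarrow Y'$ whenever $Y \subseteq Y'$. Non-creation holds because $E \not= \emptyset$, so any decision vacuously witnesses $\Gamma : \emptyset \rightarrow Y$; and Non-triviality holds because for $X \not= \emptyset$ no decision $e$ can satisfy $\emptyset \not= O(s,e) \subseteq \emptyset$.

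For the converse, given a transition system $\theta$, I would build $\Gamma$ by taking the set of decisions to be $\theta$ itself — this is legitimate since a transition system is by definition a nonempty relation, so $E = \theta \not= \emptyset$ — and by defining, for $(A,B) \in \theta$ and $s \in S$,
\[
  O(s, (A,B)) = \left\{ \begin{array}{ll} B & \mbox{if } s \in A,\\ \emptyset & \mbox{otherwise.}\end{array}\right.
\]
The claim is then that $\Gamma : X \rightarrow Y$ if and only if $(X,Y) \in \theta$. For the right-to-left direction, given $(X,Y) \in \theta$ I would use the decision $(X,Y)$: if $X = \emptyset$ the requirement on $O$ is vacuous, and if $X \not= \emptyset$ then Non-triviality forces $Y \not= \emptyset$, while $O(s,(X,Y)) = Y$ for every $s \in X$, so $\emptyset \not= O(s,(X,Y)) \subseteq Y$ as required. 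For the left-to-right direction, suppose a decision $(A,B) \in \theta$ witnesses $\Gamma : X \rightarrow Y$; if $X = \emptyset$ then $(X,Y) \in \theta$ by Non-creation, and otherwise I pick any $s \in X$, note that $O(s,(A,B)) \not= \emptyset$ forces $s \in A$ and $O(s,(A,B)) = B$, hence $X \subseteq A$ and $\emptyset \not= B \subseteq Y$, so Downwards Closure gives $(X,B) \in \theta$ and Monotonicity gives $(X,Y) \in \theta$.

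The argument has no genuinely hard step; the care needed is entirely in the bookkeeping around the empty set. On the ``if'' side, Non-triviality is what guarantees that a transition into $Y$ produces a nonempty outcome, so that it counts as a genuine \emph{ability} rather than a vacuous one, while Non-creation absorbs the degenerate case $X = \emptyset$. In the construction for the converse, making a decision $(A,B)$ ``fail'' on every state outside $A$ is exactly what forces any witness of $\Gamma : X \rightarrow Y$ to arise from a pair $(A,B) \in \theta$ with $X \subseteq A$ and $B \subseteq Y$ — precisely the configuration from which Downwards Closure and Monotonicity recover $(X,Y) \in \theta$. I would also record the routine consistency checks: $E = \theta$ and $S$ are nonempty, $O$ maps into $\parts(S)$ since $B \subseteq S$, and the case $B = \emptyset$ can occur only when $A = \emptyset$ (again by Non-triviality), so it never conflicts with the left-to-right reasoning.
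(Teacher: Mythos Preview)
Your proposal is correct and follows essentially the same approach as the paper's proof. The only cosmetic difference is that the paper enumerates $\theta$ as $\{(X_i,Y_i):i\in I\}$ and uses the index set $I$ as the set of decisions, whereas you use $\theta$ itself as the decision set; the outcome function and the verification of the two directions are otherwise identical.
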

\begin{proof}
Let $\theta \subseteq \parts(S) \times \parts(S)$ be any transition system, let us enumerate its elements $\{(X_i, Y_i) : i \in I)\}$, and let us consider the game $\Gamma = (S, I, O)$, where
\[
	O(s, i) = \left\{\begin{array}{l l}
		Y_i & \mbox{ if } s \in X_i;\\	
		\emptyset & \mbox{ otherwise.}
	\end{array}
	\right.
\]
Suppose that $(X, Y) \in \theta$. If $X = \emptyset$, then $\Gamma:X \rightarrow Y$ follows at once by definition. If instead $X \not = \emptyset$, by \textbf{non-triviality} we have that $Y$ is nonempty too, and furthermore $(X, Y) = (X_i, Y_i)$ for some $i \in I$. Then $O(s, i) = Y_i \not= \emptyset$ for all $s \in X_i$, as required. 

Now suppose that $\Gamma: X \rightarrow Y$. Then there exists a $i \in I$ such that $\emptyset \not = O(s, i) \subseteq Y$ for all $s \in X$. If $X \not = \emptyset$, this implies that $X \subseteq X_i$ and $Y_i \subseteq Y$. Hence, by \textbf{monotonicity} and \textbf{downwards closure}, $(X, Y) \in \theta$, as required. If instead $X = \emptyset$, then by \textbf{non-creation} we have again that $(X, Y) \in \theta$. 

Conversely, consider a decision game $\Gamma = (S, E, O)$. Then the set of its abilities satisfies our four axioms: 
\begin{description}
\item[Downwards Closure: ]Suppose that $\Gamma : X \rightarrow Y$ and that $X' \subseteq X$. By definition, there exists a $e \in E$ such that $\emptyset \not = O(s, e) \subseteq Y$ for all $s \in X$. But then the same holds for all $s \in X'$, and hence $\Gamma: X' \rightarrow Y$.
\item[Monotonicity: ]Suppose that $\Gamma : X \rightarrow Y$ and that $Y \subseteq Y'$. By definition, there exists a $e \in E$ such that $\emptyset \not = O(s, e) \subseteq Y$ for all $s \in X$. But then, for all such $s$, $O(s,e) \subseteq Y'$ too, and hence $\Gamma: X \rightarrow Y'$.
\item[Non-creation: ] Let $Y \subseteq S$ and let $e \in E$ be any possible decision. Then trivially $\emptyset \not = O(s, e) \subseteq Y$ for all $s \in \emptyset$, and hence $\Gamma : \emptyset \rightarrow Y$. 
\item[Non-triviality: ] Let $s_0 \in X$, and suppose that $\Gamma: X \rightarrow Y$. Then there exists a $e$ such that $\emptyset \not = O(s, e) \subseteq Y$ for all $s \in X$, and hence in particular $\emptyset \not = O(s_0, e) \subseteq Y$. Therefore, $Y$ is nonempty. 
\end{description}
\end{proof}
What this theorem tells us is that our notion of transition system is the correct one: it captures precisely the abilities of an agent making choices under imperfect information and attempting to guarantee that, if the initial state is in a set $X$, the outcome will be in a set $Y$. 
 
\begin{defin}[Trump]
Let $S$ be a nonempty set of states. A \emph{trump} over $S$ is a nonempty, downwards closed family of subsets of $S$. 
\end{defin}
Whereas a transition system describes the abilities of an agent to transition from a set of possible initial states to a set of possible terminal states, a trump describes the agent's abilities to reach \emph{some} terminal state from a set of possible initial states:\footnote{The term ``trump'' is taken from \cite{hodges97}, where it is used to describe the set of all teams which satisfy a given formula.}
\begin{propo}
Let $\theta$ be a transition system and let $Y \subseteq S \not = \emptyset$. Then $\reach(\theta, Y) = \{X ~|~ (X, Y) \in \theta\}$ forms a trump. Conversely, for any trump $\mathcal X$ over $S$ there exists a transition system $\theta$ such that $\mathcal X = \reach(\theta, Y)$ for any nonempty $Y \subseteq S$.
\end{propo}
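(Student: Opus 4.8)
The plan is to treat the two directions separately; in each case it is essentially a matter of matching the four transition-system axioms against the two trump conditions (nonemptiness and downward closure), with the only real care needed around the empty set. For the forward direction, fix a transition system $\theta$ and a nonempty $Y \subseteq S$. To see that $\reach(\theta, Y)$ is nonempty I would invoke \textbf{Non-creation}, which gives $(\emptyset, Y) \in \theta$ and hence $\emptyset \in \reach(\theta, Y)$. For downward closure, if $X \in \reach(\theta, Y)$ and $X' \subseteq X$, then $(X, Y) \in \theta$, so \textbf{Downwards Closure} yields $(X', Y) \in \theta$, i.e.\ $X' \in \reach(\theta, Y)$. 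This settles the first claim.

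For the converse, given a trump $\mathcal{X}$ I would define
\[
\theta = \{(X, Y) : X \in \mathcal{X},\ Y \neq \emptyset\} \cup \{(\emptyset, \emptyset)\}.
\]
Here I would first record the small but useful fact that $\emptyset \in \mathcal{X}$, since $\mathcal{X}$ is nonempty and downward closed; consequently $(\emptyset, Y) \in \theta$ for every $Y \subseteq S$, and the extra summand $\{(\emptyset,\emptyset)\}$ only matters when $Y = \emptyset$.

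Next I would verify the four axioms. \textbf{Downwards Closure} follows from downward closure of $\mathcal{X}$ (the pair $(\emptyset,\emptyset)$ being trivial, as its only first-coordinate subset is $\emptyset$ itself). \textbf{Monotonicity} holds because the condition ``$Y \neq \emptyset$'' is preserved under passing to a superset, and again $(\emptyset,\emptyset)$ is handled using $\emptyset \in \mathcal{X}$. \textbf{Non-creation} is exactly the remark above. \textbf{Non-triviality} holds because $(\emptyset,\emptyset)$ is the only element of $\theta$ with empty second coordinate, so no pair $(X,\emptyset)$ with $X \neq \emptyset$ lies in $\theta$. One should also note $\theta$ is nonempty, which is clear.

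Finally, for any nonempty $Y$ the summand $\{(\emptyset,\emptyset)\}$ contributes nothing, so $(X,Y) \in \theta$ iff $X \in \mathcal{X}$, giving $\reach(\theta, Y) = \mathcal{X}$; and this holds uniformly in the choice of nonempty $Y$, which is precisely the strengthened statement. I expect the only mild obstacle to be the bookkeeping of these empty-set edge cases in the axiom verification — everything else is immediate. (One could instead appeal to the preceding theorem and realize $\mathcal{X}$ through a concrete decision game, but the direct construction above seems cleaner.)
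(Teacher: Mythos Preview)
Your proof is correct and follows essentially the same approach as the paper: the forward direction is identical, and for the converse the paper defines $\theta = \{(A,B): B \neq \emptyset,\ \exists i\ A \subseteq X_i\} \cup \{(\emptyset,\emptyset)\}$ where $\{X_i\}$ enumerates $\mathcal X$, which by downward closure of $\mathcal X$ is exactly your $\{(X,Y): X \in \mathcal X,\ Y \neq \emptyset\} \cup \{(\emptyset,\emptyset)\}$. Your verification of the four axioms is more explicit than the paper's (which simply asserts ``it is easy to see that $\theta$ is a transition system''), but the construction and argument are the same.
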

\begin{proof}
Let $\theta$ be a transition system. Then if $(X, Y) \in \theta$ and $X' \subseteq X$, by downwards closure we have at once that $(X', Y) \in \theta$. Furthermore, $(\emptyset, Y) \in \theta$ for any $Y$. Hence, $\reach(\theta, Y)$ is a trump, as required.

Conversely, let $\mathcal X \subseteq \parts(\parts(S))$ be a trump, and let us enumerate its elements as $\{X_i : i \in I\}$. Then define $\theta$ as
\[
	\theta = \{(A, B) : \emptyset \not = B \subseteq S, \exists i \in I \mbox{ s.t. } A \subseteq X_i\} \cup \{(\emptyset, \emptyset)\}
\]
It is easy to see that $\theta$ is a transition system; and by construction, for $Y \not = \emptyset$ we have that $(A, Y) \in \theta \Leftrightarrow \exists i \mbox{ s.t. } A \subseteq X_i \Leftrightarrow A \in \mathcal X$, where we used the fact that $\mathcal X$ is downwards closed.
\end{proof}

We can now define the syntax and semantics of Transition Logic:
\begin{defin}[Transition Model]
Let $\Phi$ be a set of \emph{atomic propositional symbols} and let $\Theta$ be a set of \emph{atomic transition symbols}. Then a \emph{transition model} is a tuple $T = (S, \{\theta_t : t \in \Theta\}, V)$, where $S$ is a nonempty set of states, $\theta_t$ is a transition system over $S$ for any $t \in \Theta$, and $V$ is a function sending each $p \in \Phi$ into a trump of $S$.
\end{defin}
\begin{defin}[Transition Logic - Syntax]
Let $\Phi$ be a set of atomic propositions and let $\Theta$ be a set of atomic transitions. Then the \emph{transition terms} and \emph{formulas} of our language are defined respectively as 
\begin{eqnarray*}
\tau &::=& t ~|~ \phi? ~|~ \tau \otimes \tau ~|~ \tau \cap \tau ~|~ \tau;\tau\\
\phi &::=& \top ~|~ p ~|~ \phi \vee \phi ~|~ \phi \wedge \phi ~|~ \langle \tau\rangle \phi
\end{eqnarray*}
where $t$ ranges over $\Theta$ and $p$ ranges over $\Phi$.
\end{defin}
\begin{defin}[Transition Logic - Semantics]
Let $T = (S, \{\theta_t : t \in \Theta), V)$ be a transition model, let $\tau$ be a transition term,  and let $X, Y \subseteq S$. Then we say that $\tau$ \emph{allows} the transition from $X$ to $Y$, and we write $T \models_{X \rightarrow Y} \tau$, if and only if 
\begin{description}
\item[TL-atomic-tr: ] $\tau = t$ for some $t \in \Theta$ and $(X, Y) \in \theta_t$; 
\item[TL-test: ] $\tau = \phi?$ for some transition formula $\phi$ such that $T \models_X \phi$ in the sense described later in this definition, and $X \subseteq Y$; 
\item[TL-$\otimes$: ] $\tau = \tau_1 \otimes \tau_2$, and $X = X_1 \cup X_2$ for two $X_1$ and $X_2$ such that $T \models_{X_1 \rightarrow Y} \tau_1$ and $T \models_{X_2 \rightarrow Y} \tau_2$; 
\item[TL-$\cap$: ] $\tau = \tau_1 \cap \tau_2$, $T \models_{X \rightarrow Y} \tau_1$ and $T \models_{X \rightarrow Y} \tau_2$; 
\item[TL-concat: ] $\tau = \tau_1;\tau_2$ and there exists a $Z \subseteq S$ such that $T \models_{X \rightarrow Z} \tau_1$ and $T \models_{Z \rightarrow Y} \tau_2$. 
\end{description}
Analogously, let $\phi$ be a transition formula, and let $X \subseteq S$. Then we say that $X$ \emph{satisfies} $\phi$, and we write $T \models_X \phi$, if and only if 
\begin{description}
\item[TL-$\top$: ]$\phi = \top$;
\item[TL-atomic-pr: ]$\phi = p$ for some $p \in \Phi$ and $X \in V(p)$; 
\item[TL-$\vee$: ]$\phi = \psi_1 \vee \psi_2$ and $T \models_X \psi_1$ or $T \models_X \psi_2$; 
\item[TL-$\wedge$: ] $\phi = \psi_1 \wedge \psi_2$, $T \models_X \psi_1$ and $T \models_X \psi_2$; 
\item[TL-$\diamond$: ]$\phi = \langle \tau \rangle \psi$ and there exists a $Y$ such that $T \models_{X \rightarrow Y} \tau$ and $T \models_Y \psi$.
\end{description}
\end{defin}
\begin{propo}
For any transition model $T$, transition term $\tau$ and transition formula $\phi$, the set 
\[
	\|\tau\|_T = \{(X, Y) : T \models_{X \rightarrow Y} \tau\}
\]
is a transition system and the set
\[
	\|\phi\|_T = \{X : T \models_X \phi\}
\]
is a trump.
\end{propo}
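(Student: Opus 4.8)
The plan is to prove both statements simultaneously, by induction on the combined structure of transition terms $\tau$ and formulas $\phi$: this is forced by the fact that the test term $\phi?$ refers to the satisfaction relation $T \models_X \phi$, while the diamond formula $\langle\tau\rangle\psi$ refers to the transition relation $T \models_{X\rightarrow Y}\tau$, so neither induction can be carried out in isolation. For each term constructor I will verify the four defining axioms of a transition system (\emph{Downwards Closure}, \emph{Monotonicity}, \emph{Non-creation}, \emph{Non-triviality}); nonemptiness of $\|\tau\|_T$ is then automatic, since \emph{Non-creation} already puts $(\emptyset, Y)$ in it. For each formula constructor I will verify that $\|\phi\|_T$ is nonempty and downwards closed; since a nonempty downwards closed family always contains $\emptyset$, the nonemptiness clause reduces to checking $\emptyset \in \|\phi\|_T$, which will in every case follow from \emph{Non-creation} of the relevant subterm together with the inductive fact that $\emptyset$ lies in every $\|\psi\|_T$.

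The base and ``administrative'' cases are immediate. $\|t\|_T = \theta_t$ is a transition system and $\|p\|_T = V(p)$ is a trump directly by the definition of transition model, and $\|\top\|_T = \parts(S)$ is visibly a trump. For the propositional connectives, $\|\psi_1\vee\psi_2\|_T = \|\psi_1\|_T \cup \|\psi_2\|_T$ and $\|\psi_1\wedge\psi_2\|_T = \|\psi_1\|_T \cap \|\psi_2\|_T$ are downwards closed and nonempty whenever both components are (the $\wedge$ case using that $\emptyset$ lies in both). For $\langle\tau\rangle\psi$: downwards closure follows from \emph{Downwards Closure} of $\|\tau\|_T$ (keep the same witness $Y$), and $\emptyset \in \|\langle\tau\rangle\psi\|_T$ follows by taking $Y = \emptyset$, since $(\emptyset,\emptyset) \in \|\tau\|_T$ by \emph{Non-creation} and $\emptyset \in \|\psi\|_T$ by the inductive hypothesis.

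The remaining term constructors are routine threading arguments. For $\tau_1 \cap \tau_2$ all four axioms pass to the intersection of two transition systems with no work. For $\tau_1;\tau_2$ one carries the intermediate set $Z$ along: \emph{Downwards Closure} shrinks the input using \emph{Downwards Closure} of $\|\tau_1\|_T$, \emph{Monotonicity} enlarges the output using \emph{Monotonicity} of $\|\tau_2\|_T$, \emph{Non-creation} uses $Z = \emptyset$, and \emph{Non-triviality} uses the chain ``$X \neq \emptyset \Rightarrow Z \neq \emptyset \Rightarrow Y \neq \emptyset$'' obtained from two applications of \emph{Non-triviality}. For the test $\phi?$, the side condition $X \subseteq Y$ makes \emph{Monotonicity} and \emph{Non-triviality} immediate, while \emph{Downwards Closure} and \emph{Non-creation} use downwards closure of $\|\phi\|_T$ and $\emptyset \in \|\phi\|_T$.

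The only case that requires a small observation is \emph{Downwards Closure} for $\tau_1 \otimes \tau_2$: given $X = X_1 \cup X_2$ with $(X_i, Y) \in \|\tau_i\|_T$ and $X' \subseteq X$, one writes $X' = (X' \cap X_1) \cup (X' \cap X_2)$ and applies \emph{Downwards Closure} of each $\|\tau_i\|_T$ to $X' \cap X_i \subseteq X_i$; \emph{Monotonicity}, \emph{Non-creation} and \emph{Non-triviality} for $\otimes$ then use, respectively, \emph{Monotonicity} of the components with the unchanged decomposition, the decomposition $X_1 = X_2 = \emptyset$, and the fact that a nonempty $X$ forces some $X_i$ nonempty, contradicting \emph{Non-triviality} of $\|\tau_i\|_T$. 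I do not anticipate any genuine obstacle; the only thing to watch is that the simultaneous induction stays honest, i.e. that $\phi?$ is treated as strictly larger than $\phi$ and $\langle\tau\rangle\psi$ as strictly larger than both $\tau$ and $\psi$, so that every appeal to the inductive hypothesis is to a strictly smaller term or formula.
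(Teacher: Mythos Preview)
Your proposal is correct and takes exactly the approach the paper intends: the paper's own proof consists of the single phrase ``By induction,'' and what you have written is a careful expansion of that induction, with all four transition-system axioms and the trump conditions verified case by case. Your treatment of the mutual recursion between terms and formulas, and in particular the decomposition $X' = (X'\cap X_1)\cup(X'\cap X_2)$ for the $\otimes$ case, is precisely the sort of detail the paper leaves to the reader.
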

\begin{proof}
By induction.
\end{proof}
We end this subsection with a few simple observations about this logic.

First of all, we did not take the negation as one of the primitive connectives. Indeed, Transition Logic, much like Dependence Logic, has an intrinsically \emph{existential} character: it can be used to reason about which sets of possible states an agent \emph{may} reach, but not to reason about which ones such an agent \emph{must} reach. There is of course no reason, in principle, why a negation could not be added to the language, just as there is no reason why a negation  cannot be added to Dependence Logic, thus obtaining the far more powerful \emph{Team Logic} \cite{vaananen07b,kontinennu09}: however, this possible extension will not be studied in this work.

The connectives of Transition Logic are, for the most part, very similar to those of Dynamic Game Logic, and their interpretation should pose no difficulties. The exception is the \emph{tensor operator} $\tau_1 \otimes \tau_2$, which substitutes the game union operator $\gamma_1 \cup \gamma_2$ and which, while sharing roughly the same informal meaning, behaves in a very different way from the semantic point of view (for example, it is not in general idempotent!) 

The decision game corresponding to $\tau_1 \otimes \tau_2$ can be described as follows: first the agent chooses an index $i \in \{1, 2\}$, then he or she picks a strategy for $\tau_i$ and plays accordingly. However, the choice of $i$ may be a function of the initial state: hence, the agent can guarantee that the output state will be in $Y$ whenever the input state is in $X$ only if he or she can split $X$ into two subsets $X_1$ and $X_2$ and guarantee that the state in $Y$ will be reached from any state in $X_1$ when $\tau_1$ is played, and from any state in $X_2$ when $\tau_2$ is played.

It is also of course possible to introduce a ``true'' choice operator $\tau_1 \cup \tau_2$, with semantical condition
\begin{description}
\item[TL-$\cup$: ] $T \models_{X \rightarrow Y} \tau_1 \cup \tau_2$ iff $T \models_{X \rightarrow Y} \tau_1$ or $T \models_{X \rightarrow Y} \tau_2$;
\end{description}
but we will not explore this possibility any further in this work, nor we will consider any other possible connectives such as, for example, the iteration operator
\begin{description}
\item[TL-$*$: ] $T \models_{X \rightarrow Y} \tau^*$ iff there exist $n \in \mathbb N$ and $Z_0 \ldots Z_n$ such that $Z_0 = X$, $Z_n = Y$ and $T \models_{Z_i \rightarrow Z_{i+1}} \tau$ for all $i \in 1 \ldots n-1$.
\end{description}
\subsection{Transition Logic and Dependence Logic}
\label{subsect:TL-Rep}
This subsection contains the central result of this work, that is, the analogues of Theorems \ref{theo:repFO1} and \ref{theo:repFO2} for Dependence Logic and Transition Logic. \\

Representing Dependence Logic models and formulas in Transition Logic is fairly simple:
\begin{defin}[$M^{TL}$]
\label{defin:DL2TL-mod}
Let $M$ be a first-order model. Then $M^{TL}$ is the transition model $(S, \Theta, V)$ such that 
\begin{itemize}
\item $S$ is the set of all teams over $M$; 
\item The set of all atomic transition symbols is $\{\exists v, \forall v : v \in \var\}$, and hence $\Theta$ is $\{\theta_{\exists v}, \theta_{\forall v} : v \in \var\}$;
\item For any variable $v$, $\theta_{\exists v} = \{(X, Y) : \exists F \mbox{ s.t. } X[F/v] \subseteq Y \}$ and $\theta_{\forall v} = \{(X, Y) : X[M/v] \subseteq Y\}$;
\item For any first-order literal or dependence atom $\alpha$, $V(\alpha) = \{X : M \models_X \phi\}$. 
\end{itemize}
\end{defin}
\begin{defin}[$\phi^{TL}$]
\label{defin:DL2TL-form}
Let $\phi$ be a Dependence Logic formula. Then $\phi^{TL}$ is the transition term defined as follows: 
\begin{enumerate}
\item If $\phi$ is a literal or a dependence atom, $\phi^{TL} = \phi?$;
\item If $\phi = \psi_1 \vee \psi_2$, $\phi^{TL} = (\psi_1)^{TL} \otimes (\psi_2)^{TL}$; 
\item If $\phi = \psi_1 \wedge \psi_2$, $\phi^{TL} = (\psi_1)^{TL} \wedge (\psi_2)^{TL}$;
\item If $\phi = \exists v \psi$, $\phi^{TL} = \exists v; (\psi)^{TL}$; 
\item If $\phi = \forall v \psi$, $\phi^{TL} = \forall v; (\psi)^{TL}$. 
\end{enumerate}
\end{defin}
\begin{theo}
\label{theo:TL-Rep1}
For all first-order models $M$, teams $X$ and formulas $\phi$, the following are equivalent: 
\begin{itemize}
	\item $M \models_X \phi$;
	\item $\exists Y \mbox{ s.t. } M^{TL} \models_{X \rightarrow Y} \phi^{TL}$;
	\item $M^{TL} \models_X \langle \phi^{TL}\rangle \top$;
	\item $M^{TL} \models_{X \rightarrow S} \phi^{TL}$.
\end{itemize}
\end{theo}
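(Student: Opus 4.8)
The plan is to prove the theorem by structural induction on the Dependence Logic formula $\phi$, establishing simultaneously a slightly stronger statement that makes the induction go through cleanly. Specifically, I would prove by induction on $\phi$ that for all teams $X$ and all $Y \subseteq S$,
\[
	M^{TL} \models_{X \rightarrow Y} \phi^{TL} \Longleftrightarrow \exists X' \subseteq X \text{ with } M \models_{X'} \phi \text{ and } X' \in Y,
\]
wait --- more carefully: the right shape is that $M^{TL}\models_{X\rightarrow Y}\phi^{TL}$ holds if and only if there is some team $Z$ with $X \to Z$ "realizing" $\phi$ (i.e. $M\models_Z\phi$ in the appropriate localized sense) and $Z \in Y$. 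The cleanest invariant, given how $\otimes$ and $;$ are defined, is: $M^{TL}\models_{X\rightarrow Y}\phi^{TL}$ iff there is a team $Z\in Y$ such that the transition from $X$ to $\{Z\}$ is allowed by $\phi^{TL}$, and the latter is equivalent to $M\models_X\phi$ together with $Z$ being an appropriate "output team" witnessing satisfaction. I would pin down this invariant first by examining the base case and the $\exists v$ case, then state it precisely as the induction hypothesis.

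First I would handle the base case: if $\phi$ is a literal or dependence atom, then $\phi^{TL}=\phi?$, and by \textbf{TL-test} we have $M^{TL}\models_{X\rightarrow Y}\phi?$ iff $M^{TL}\models_X\phi$ (i.e. $X\in V(\phi)$, i.e. $M\models_X\phi$ by the definition of $V$ in $M^{TL}$) and $X\subseteq Y$ --- here the states of $M^{TL}$ are teams, so "$X\subseteq Y$" means $X\in Y$ as an element of the powerset-of-teams. This already shows the equivalence of the first three bullets for atoms, and the fourth follows since $X\in S$ always. Next, for $\phi=\psi_1\wedge\psi_2$: here $\phi^{TL}=(\psi_1)^{TL}\wedge(\psi_2)^{TL}$ is a \emph{formula}-level conjunction, not a transition term, so I would need to check the bullets using \textbf{TL-$\wedge$} and \textbf{TL-$\diamond$} --- this is a slight asymmetry in Definition~\ref{defin:DL2TL-form} (conjunction stays at the formula level) and I would verify it interacts correctly, using \textbf{TS-$\wedge$} on the Dependence Logic side. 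For $\phi=\psi_1\vee\psi_2$: $\phi^{TL}=(\psi_1)^{TL}\otimes(\psi_2)^{TL}$, and \textbf{TL-$\otimes$} says $M^{TL}\models_{X\rightarrow Y}(\psi_1)^{TL}\otimes(\psi_2)^{TL}$ iff $X=X_1\cup X_2$ with $M^{TL}\models_{X_i\rightarrow Y}(\psi_i)^{TL}$; applying the induction hypothesis to each disjunct and matching this against \textbf{TS-$\vee$} ($X=Y_1\cup Y_2$, $M\models_{Y_i}\psi_i$) gives the step, using downward closure (Proposition on Downwards Closure) to reconcile the fact that $\otimes$ splits $X$ while team-semantic disjunction splits into possibly-overlapping subteams. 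For $\phi=\exists v\,\psi$: $\phi^{TL}=\exists v;(\psi)^{TL}$, so \textbf{TL-concat} gives $M^{TL}\models_{X\rightarrow Y}\phi^{TL}$ iff there is $Z$ with $(X,Z)\in\theta_{\exists v}$ and $M^{TL}\models_{Z\rightarrow Y}(\psi)^{TL}$; unfolding $\theta_{\exists v}=\{(X,Z):\exists F,\ X[F/v]\subseteq Z\}$ and applying the induction hypothesis and \textbf{TS-$\exists$}, together with downward closure, gives the step. The $\forall v$ case is analogous with $\theta_{\forall v}=\{(X,Z):X[M/v]\subseteq Z\}$ and \textbf{TS-$\forall$}; note here the transition is deterministic so no function choice is involved.

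Once the main equivalence ($M\models_X\phi \Leftrightarrow \exists Y.\,M^{TL}\models_{X\rightarrow Y}\phi^{TL}$) is established, the remaining two bullets are immediate: $M^{TL}\models_X\langle\phi^{TL}\rangle\top$ unfolds via \textbf{TL-$\diamond$} to "$\exists Y.\ M^{TL}\models_{X\rightarrow Y}\phi^{TL}$ and $M^{TL}\models_Y\top$", and $M^{TL}\models_Y\top$ holds for all $Y$ by \textbf{TL-$\top$}, so this bullet coincides with the second. And $M^{TL}\models_{X\rightarrow S}\phi^{TL}$ implies $\exists Y$ (namely $Y=S$) with the transition allowed; conversely if $M^{TL}\models_{X\rightarrow Y}\phi^{TL}$ for some $Y$, then since $Y\subseteq S$ and $\|\phi^{TL}\|_{M^{TL}}$ is a transition system, monotonicity gives $M^{TL}\models_{X\rightarrow S}\phi^{TL}$. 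I expect the main obstacle to be getting the induction hypothesis exactly right so that the $\otimes$ and the two sequential-composition cases close --- in particular handling the gap between "$X$ is split/mapped into a subset of $Z$" on the transition side and "$X$ maps exactly onto a team satisfying $\psi$" on the team-semantics side, which is bridged precisely by downward closure of Dependence Logic satisfaction. The formula-level conjunction case also deserves care since it breaks the otherwise uniform "$\phi^{TL}$ is a transition term" pattern, and I would make sure the statement of the theorem (which only asserts things about $\phi^{TL}$ through $\langle\phi^{TL}\rangle\top$ and $X\to Y$) still type-checks when $\phi^{TL}$ happens to be a formula rather than a term --- most cleanly by treating $\psi_1\wedge\psi_2$ at the formula level throughout, or equivalently observing that a formula $\chi$ can be identified with the test term $\chi?$ without changing any of the relevant semantics.
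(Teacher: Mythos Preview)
Your overall strategy---structural induction on $\phi$, matching each Transition Logic rule to the corresponding Team Semantics rule, and invoking downward closure to close the gaps in the quantifier cases---is correct and is what the paper does. But two confusions in your write-up need to be cleared up, and the paper's organization of the induction is simpler than yours.

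\textbf{Type confusion.} Despite the paper's wording in Definition~\ref{defin:DL2TL-mod}, the states of $M^{TL}$ must be \emph{assignments}, not teams: observe that $\theta_{\exists v}=\{(X,Y):\exists F.\ X[F/v]\subseteq Y\}$ only makes sense when $X$ and $Y$ are sets of assignments, and $V(\alpha)=\{X:M\models_X\alpha\}$ is a trump (a downward-closed family of subsets of $S$) only if subsets of $S$ are teams. So in \textbf{TL-test} the condition ``$X\subseteq Y$'' is a genuine inclusion of teams, not the membership ``$X\in Y$'' you wrote. Your attempted invariants involving ``$Z\in Y$'' are therefore ill-typed and should be discarded.

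\textbf{The conjunction case.} The expression $(\psi_1)^{TL}\wedge(\psi_2)^{TL}$ in Definition~\ref{defin:DL2TL-form} is not a formula-level conjunction: $\phi^{TL}$ is declared to be a transition \emph{term}, and the paper's own proof treats this connective via the rule for $\cap$. Read ``$\wedge$'' there as ``$\cap$'' and your worry about the case ``breaking the uniform pattern'' disappears.

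\textbf{Comparison with the paper's proof.} The paper does not try to characterize $M^{TL}\models_{X\rightarrow Y}\phi^{TL}$ for arbitrary $Y$. Instead it fixes $Y=S$ from the start and proves by induction on $\phi$ that
\[
M\models_X\phi\ \Longleftrightarrow\ M^{TL}\models_{X\rightarrow S}\phi^{TL},
\]
i.e.\ the equivalence of the first and fourth bullets. This avoids entirely the ``find a common $Y$'' bookkeeping that your approach would need in the $\otimes$ and $\cap$ cases (you would have to appeal to monotonicity to merge $Y_1,Y_2$ into a single $Y$). With $Y=S$ fixed, each inductive step is a one-line unfolding: for instance, $M^{TL}\models_{X\rightarrow S}\exists v;(\psi)^{TL}$ iff some $Z\supseteq X[F/v]$ satisfies $M^{TL}\models_{Z\rightarrow S}(\psi)^{TL}$, which by induction hypothesis and downward closure is exactly $M\models_{X[F/v]}\psi$ for some $F$. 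The equivalences with the second and third bullets then fall out trivially from monotonicity and \textbf{TL-$\top$}, as you correctly observe at the end. Your route works too, but you never pin down the induction hypothesis precisely, and once the type issues above are fixed you will find that the simplest correct invariant is just the $Y=S$ statement the paper uses.
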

\begin{proof}
We show, by structural induction on $\phi$, that the first condition is equivalent to the last one. The equivalences between the last one and the second and third ones are then trivial.
\begin{enumerate}
\item If $\phi$ is a literal or a dependence atom, $M^{TL} \models_{X \rightarrow S} \phi?$ if and only if $X \in V(\phi)$, that is, if and only if $M \models_X \phi$;
\item $M^{TL} \models_{X \rightarrow S} (\psi_1)^{TL} \otimes (\psi_2)^{TL}$ if and only if $X = X_1 \cup X_2$ for two $X_1, X_2 \subseteq S$ such that $M^{TL} \models_{X_1 \rightarrow S} (\psi_1)^{TL}$ and $M^{TL} \models_{X_2 \rightarrow S} (\psi_2)^{TL}$. By induction hypothesis, this can be the case if and only if $M \models_{X_1} \psi_1$ and $M \models_{X_2} \psi_2$, that is, if and only if $M \models_X \psi_1 \vee \psi_2$. 
\item $M^{TL} \models_{X \rightarrow S} (\psi_1)^{TL} \wedge (\psi_2)^{TL}$ if and only if $M^{TL} \models_{X \rightarrow S} (\psi_1)^{TL}$ and $M^{TL} \models_{X \rightarrow S} (\psi_2)^{TL}$, that is, by induction hypothesis, if and only if $M \models_X \psi_1 \wedge \psi_2$. 
\item $M^{TL} \models_{X \rightarrow S} \exists v;(\psi)^{TL}$ if and only if there exists a $Y$ such that $Y \supseteq X[F/v]$ for some $F$ and $M^{TL} \models_{Y \rightarrow S} \psi$. By induction hypothesis and downwards closure, this can be the case if and only if $M \models_{X[F/v]} \psi$ for some $F$, that is, if and only if $M \models_X \exists v \psi$; 
\item $M^{TL} \models_{X \rightarrow S} \forall v; (\psi)^{TL}$ if and only if $M^{TL} \models_{Y \rightarrow S} (\psi)^{TL}$ for some $Y \supseteq X[M/v]$, that is, if and only if $M \models_{X[M/v]} \psi$, that is, if and only if $M \models_X \forall v \psi$. 
\end{enumerate}
\end{proof}
One interesting aspect of this representation result is that Dependence Logic \emph{formulas} correspond to Transition Logic \emph{transitions}, not to Transition Logic \emph{formulas}. This can be thought of as one first hint of the fact that Dependence Logic can be thought of as a logic of transitions: and in the later sections, we will explore this idea more in depth.

Representing Transition Models, game terms and formulas in Dependence Logic is somewhat more complex:
\begin{defin}[$T^{DL}$]
Let $T = (S, (\theta_t : t \in \Theta), V)$ be a transition model. Furthermore, for any $t \in \Theta$, let $\theta_t = \{(X_i, Y_i) : i \in I_t\}$, and, for any $p \in \Phi$, let $V(p) = \{X_j : j \in J_p\}$. Then $T^{DL}$ is the first-order model with domain\footnote{Here we write $A \uplus B$ for the \emph{disjoint union} of the sets $A$ and $B$.} $S \uplus \biguplus \{I_t : t \in \Theta\} \uplus \biguplus \{J_p : p \in \Phi\}$ whose signature contains 
\begin{itemize}
\item For every $t \in \Theta$, a ternary relation $R_t$ whose interpretation is $\{(i, x, y) : i \in I_t, x \in X_i, y \in Y_i\}$;
\item For every $p \in \Phi$, a binary relation $V_p$ whose interpretation is $\{(j, x) : j \in J_p, x \in X_j\}$. 
\end{itemize}
\end{defin}
\begin{defin}[$\phi^{DL}_x$ and $\tau^{DL}_x$]
For any transition formula $\phi$ and variable $x$, the Dependence Logic formula $\phi^{DL}_x$ is defined as
\begin{enumerate}
\item $\top^{DL}_x$ is $\top$;
\item For all $p \in \Phi$, $p^{DL}_x$ is $\exists j (=\!\!(j) \wedge V_p(j, x))$;
\item $(\psi_1 \vee \psi_2)^{DL}_x$ is $(\psi_1)^{DL}_x \sqcup (\psi_2)^{DL}$, where $\sqcup$ is the classical disjunction introduced in Definition \ref{defin:classic_or};
\item $(\psi_1 \wedge \psi_2)^{DL}_x$ is $(\psi_1)^{DL}_x \wedge (\psi_2)^{DL}_x$; 
\item $(\langle \tau\rangle \psi)^{DL}_x$ is $\exists P ((\tau)^{DL}_x(P) \wedge \forall y (\lnot Py \vee (\psi)^{DL}_y))$,
\end{enumerate}
where for any transition term $\tau$, variable $x$ and unary relation symbol $P$, $\tau^{DL}_x(P)$ is defined as
\begin{enumerate}
\setcounter{enumi}{5}
\item For all $t \in \Theta$, $t^{DL}_x(P)$ is $\exists i (=\!\!(i) \wedge \exists y (R_t(i, x, y)) \wedge \forall y(\lnot R_t(i, x, y) \vee Py))$;
\item For all formulas $\phi$, $(\phi?)^{DL}_x(P)$ is $\phi^{DL}_x \wedge Px$;
\item $(\tau_1 \otimes \tau_2)^{DL}_x(P) = (\tau_1)^{DL}_x(P) \vee (\tau_2)^{DL}_x(P)$; 
\item $(\tau_1 \cap \tau_2)^{DL}_x(P) = (\tau_1)^{DL}_x(P) \wedge (\tau_2)^{DL}_x(P)$; 
\item $(\tau_1;\tau_2)^{DL}_x(P) = \exists Q((\tau_1)^{DL}_x(Q) \wedge \forall y (\lnot Qy \vee (\tau_2)^{DL}_y(P)))$ for a new and unused variable $y$. 
\end{enumerate}
\end{defin}
\begin{theo}
\label{theo:TL-Rep2}
For all transition models $T = (S, (\theta_t : t \in \Theta), V)$, transition terms $\tau$, transition formulas $\phi$, variables $x$, sets $P \subseteq S$ and teams $X$ over $T^{DL}$ with $X(x) \subseteq S$,\footnote{That is, such that $X(x)$ is a set of states of the transition model.} 
\[
	T^{DL} \models_X \phi^{DL}_x \Leftrightarrow T \models_{X(x)} \phi
\]
and 
\[
	T^{DL} \models_X \tau^{DL}_x(P) \Leftrightarrow T \models_{X(x) \rightarrow P} \tau.
\]
\end{theo}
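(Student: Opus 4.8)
The plan is to prove the two biconditionals simultaneously by a single structural induction that runs over transition formulas $\phi$ and transition terms $\tau$ together, since $\phi^{DL}_x$ may contain subexpressions of the form $\tau^{DL}_y(P)$ and conversely. Two things should be settled before the induction proper. First, the empty-team case: if $X = \emptyset$ the left-hand sides hold by Proposition~\ref{propo:emptyteam}, and the right-hand sides hold because $T \models_\emptyset \phi$ and $T \models_{\emptyset \rightarrow Y} \tau$ for every $\phi$, $\tau$ and $Y \subseteq S$ (a one-line induction, using that every trump contains $\emptyset$ and, by the non-creation axiom, every transition system contains $(\emptyset, Y)$); so one may henceforth assume $X \neq \emptyset$, hence also $X(x) \neq \emptyset$ since $X(x) \subseteq S$. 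Second, and this is where the real work is, I would isolate an auxiliary lemma: for every $\tau$, every $x$, every $P$ and every team $X$ with $X(x) \subseteq S$, $T^{DL} \models_X \tau^{DL}_x(P)$ iff $T^{DL} \models_X \tau^{DL}_x(P \cap S)$. This is proved by a short induction on $\tau$ and rests on the observation that in $\tau^{DL}_x(P)$ the relation symbol $P$ is only ever applied to a variable that is ``anchored to $S$'': in the atomic clause $t^{DL}_x(P)$ the atom $Py$ occurs under the guard $R_t(i,x,y)$ and $R_t$ relates only triples whose last coordinate lies in some $Y_i \subseteq S$; in $(\psi?)^{DL}_x(P)$ it is the conjunct $Px$ with $X(x) \subseteq S$; and the clauses for $\otimes$, $\cap$ and $;$ propagate this, using downwards closure to cut the intermediate teams down to their $S$-valued parts before invoking the induction hypothesis.

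The formula cases are then routine. For $\top$ both sides are vacuously true. For an atom $p$, unwinding $p^{DL}_x = \exists j(=\!\!(j) \wedge V_p(j,x))$: the dependence atom forces the existential witness to be a constant $j_0$, and $V_p(j_0,x)$ holding across $X$ says precisely that $j_0 \in J_p$ and $X(x) \subseteq X_{j_0}$; since $V(p) = \{X_j : j \in J_p\}$ is a downwards-closed trump, this is equivalent to $X(x) \in V(p)$, that is, to $T \models_{X(x)} p$. For $\psi_1 \vee \psi_2$ one applies Proposition~\ref{propo:classic_or} to $\sqcup$ (its premise that $T^{DL}$ has at least two elements is covered by the standing assumption), reducing the left side to ``$T \models_{X(x)} \psi_1$ or $T \models_{X(x)} \psi_2$'', which is exactly clause TL-$\vee$. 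The case $\psi_1 \wedge \psi_2$ is immediate from TS-$\wedge$ and TL-$\wedge$. For $\langle \tau \rangle \psi$ one unwinds $\exists P((\tau)^{DL}_x(P) \wedge \forall y(\lnot Py \vee (\psi)^{DL}_y))$ (the abbreviation $\exists P$ being legitimate by the closure of Dependence Logic under existential second-order quantification): in the direction ``$\Leftarrow$'' one is given $Y \subseteq S$ with $T \models_{X(x) \rightarrow Y} \tau$ and $T \models_Y \psi$ (note that clause TL-$\diamond$, together with TL-concat and TL-test, keeps every intermediate set inside $S$), and the witness $P := Y$ works via the two induction hypotheses; in the direction ``$\Rightarrow$'' one is given some $P$ ranging over the whole domain of $T^{DL}$, passes to $P_0 := P \cap S$ using the auxiliary lemma on the first conjunct, observes that every assignment in the team ``$X$ with $y$ over the domain'' whose $y$-value lies in $P_0$ must be on the $(\psi)^{DL}_y$-side of the split, so by downwards closure $(\psi)^{DL}_y$ holds on the subteam whose $y$-values are exactly $P_0$, and then applies the induction hypothesis for $\psi$ with $Y := P_0$.

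The transition-term cases have the same flavour. The atomic case $t$ is the computational core: $T^{DL} \models_X t^{DL}_x(P)$ unwinds --- via the constancy forced by $=\!\!(i)$, the conjunct $\exists y\, R_t(i,x,y)$, and the conjunct $\forall y(\lnot R_t(i,x,y) \vee Py)$ --- to ``there is $i \in I_t$ with $X(x) \subseteq X_i$ and $Y_i \subseteq P$'', and since $\theta_t$ is enumerated as $\{(X_i,Y_i) : i \in I_t\}$ and is, as a transition system, downwards closed in its first argument and monotone in its second, this is exactly $(X(x),P) \in \theta_t$, i.e.\ $T \models_{X(x) \rightarrow P} t$; the non-triviality and non-creation axioms are precisely what make the boundary cases (empty $X(x)$, empty $Y_i$) line up. For $\psi?$ the conjunct $Px$ contributes ``$X(x) \subseteq P$'', which combined with the induction hypothesis for $\psi$ matches clause TL-test. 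For $\tau_1 \otimes \tau_2$ the splitting disjunction of Dependence Logic matches TL-$\otimes$: a split $X = X_1 \cup X_2$ on the left induces $X(x) = X_1(x) \cup X_2(x)$ on the right, and conversely a split $X(x) = A_1 \cup A_2$ pulls back to $X_i := \{s \in X : s(x) \in A_i\}$, using downwards closure of transition systems to descend from $A_i$ to $X_i(x)$. The case $\tau_1 \cap \tau_2$ is immediate. Finally $\tau_1;\tau_2$ parallels $\langle \tau \rangle \psi$: the intermediate set $Z$ that TL-concat guarantees to be a subset of $S$ serves as the second-order witness $Q$, and in the reverse direction one again replaces $Q$ by $Q \cap S$ via the auxiliary lemma and uses downwards closure to recover the $\tau_2$-part on the subteam with the appropriate $y$-values.

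The main obstacle, as signalled above, is the mismatch of scope between the second-order quantifiers $\exists P$ and $\exists Q$ produced by the translations --- which range over all subsets of the domain of $T^{DL}$ --- and both the statement to be proved and the semantics of Transition Logic, which speak only of subsets of $S$. The auxiliary lemma $\tau^{DL}_x(P) \equiv \tau^{DL}_x(P \cap S)$ is exactly what bridges this gap, and getting its induction through in the concatenation case --- where one must shrink the second-order parameter and, simultaneously and via downwards closure, the team carrying the intermediate $y$-values --- is the most delicate bookkeeping in the whole proof. Everything else is a careful but mechanical matching of the team operations of Dependence Logic against the transition operations of Transition Logic, with attention paid to empty teams and to the roles of the transition-system axioms at the edge cases.
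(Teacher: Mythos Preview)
Your proposal is correct and follows the same structural-induction scheme as the paper. The notable difference is your auxiliary lemma $\tau^{DL}_x(P) \equiv \tau^{DL}_x(P\cap S)$: the paper does not isolate any such statement, and in its treatment of the cases $\langle\tau\rangle\psi$ and $\tau_1;\tau_2$ it simply applies the induction hypothesis to whatever second-order witness $P$ or $Q$ arises, without pausing over the fact that this witness ranges over all of $\dom(T^{DL})$ while both the induction hypothesis and the very meaning of $T\models_{X(x)\rightarrow P}\tau$ require $P\subseteq S$. Your lemma is precisely the device that makes that step rigorous, and your remark that the concatenation case needs a simultaneous shrink of the intermediate $Q$ and of the $y$-carrying team (via downwards closure) is exactly the bookkeeping the paper leaves implicit. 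So the two proofs are the same induction; what your version buys is precision on a point the paper elides, at the cost of one short extra induction, while the paper's version is terser but leaves that subtlety to the reader.
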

\begin{proof}
The proof is by structural induction on terms and formulas. 

Let us first consider the cases corresponding to formulas: 
\begin{enumerate}
\item For all teams $X$, $T^{DL} \models_X \top$ and $T \models_{X(x)} \top$, as required;
\item Suppose that $T^{DL} \models_X \exists j(=\!\!(j) \wedge V_p(j, x))$. Then there exists a $m \in \domain(T^{DL})$ such that $T^{DL} \models_{X[m/j]} V_p(j, x)$. Hence, we have that $X(x) \subseteq X_m \in V(p)$; and, by downwards closure, this implies that $X(x) \in V(p)$, and hence that $T \models_{X(x)} p$ as required.

Conversely, suppose that $T \models_{X(x)} p$. Then $X(x) \in V(p)$, and hence $X(x) = X_m$ for some $m \in J_p$. Then we have by definition that $T^{DL} \models_{X[m/j]} V_p(j, x)$, and finally that $T^{DL} \models_{X} T_x(p)$. 
\item By Proposition \ref{propo:classic_or}, $T^{DL} \models_X (\psi_1 \vee \psi_2)^{DL}_x$ if and only if $T^{DL} \models_X (\psi_1)^{DL}_x$ or $T^{DL} \models_X (\psi_2)^{DL}_x$. By induction hypothesis, this is the case if and only if $T \models_{X(x)} \psi_1$ or $T \models_{X(x)} \psi_2$, that is, if and only if $T \models_{X(x)} \psi_1 \vee \psi_2$. 
\item $T^{DL} \models_X (\psi_1 \wedge \psi_2)^{DL}_x$ if and only if $T^{DL} \models_X (\psi_1)^{DL}_x$ and $T^{DL} \models_x (\psi_2)^{DL}_x$, that is, by induction hypothesis, if and only if $T \models_X \psi_1 \wedge \psi_2$. 
\item $T^{DL} \models_X (\langle \tau \rangle \psi)^{DL}_x$ if and only if there exists a $P$ such that $T^{DL} \models_X (\tau)^{DL}_x(P)$ and $T^{DL} \models_{X[T^{DL}/y]} \lnot Py \vee (\psi)^{DL}_y$. By induction hypothesis, the first condition holds if and only if $T \models_{X(x) \rightarrow P} \tau$. As for the second one, it holds if and only if $X[T^{DL}/y] = Y_1 \cup Y_2$ for two $Y_1$, $Y_2$ such that $T^{DL} \models_{Y_1} \lnot Py$ and $T^{DL} \models_{Y_2} \tau_y(\psi)$. But then we must have that $T \models_{Y_2(y)} \psi$ and that $P \subseteq Y_2(y)$; therefore, by downwards closure, $T \models_P \psi$ and finally $T \models_{X(x)} \langle \tau\rangle \psi$. 

Conversely, suppose that there exists a $P$ such that $T \models_{X(x) \rightarrow P} \tau$ and $T \models_{P} \psi$; then by induction hypothesis we have that $T^{DL} \models_{X} (\tau)^{DL}_x(P)$ and that $T^{DL} \models_{X[T^{DL}/y]} \lnot Py \vee (\psi)^{DL}_x$, and hence $T^{DL} \models_X (\langle \tau\rangle \psi)^{DL}_x$. 
\end{enumerate}
Now let us consider the cases corresponding to transition terms:
\begin{enumerate}
\setcounter{enumi}{5}
\item Suppose that $T^{DL} \models_X \exists i (=\!\!(i) \wedge \exists y (R_t(i, x, y)) \wedge \forall y(\lnot R_t(i, x, y) \vee Py))$. If $X = \emptyset$ then $X(x) = \emptyset$, and hence by \textbf{non-creation} we have that $(X(x), P) = (\emptyset, P) \in \theta_t$, as required. 

Let us assume instead that $X \not = \emptyset$. Then, by hypothesis, there exists a $m \in \domain(T^{DL})$ such that 
\begin{itemize}
\item There exists a $F$ such that $T^{DL} \models_{X[m/i][F/y]} R_t(i, x, y)$;
\item $T^{DL} \models_{X[m/i][T^{DL}/y]} \lnot R_t(i, x, y) \vee Py$. 
\end{itemize}
From the first condition it follows that for every $p \in X(x)$ there exists a $q$ such that $R_t(m, p, q)$: therefore, by the definition of $R_t$, every such $p$ must be in $X_m$. 

From the second condition it follows that whenever $R_t(m, p, q)$ and $p \in X(x) \subseteq X_m$, $q \in P$; and, since $X(x) \not = \emptyset$, this implies that $Y_m \subseteq P$ by the definition of $R_t$.

Hence, by \textbf{monotonicity} and \textbf{downwards closure}, we have that $(X(x), P) \in \theta_t$ and that $T \models_{X(x) \rightarrow P} t$, as required. 

Conversely, suppose that $(X(x), P) = (X_m, Y_m) \in \theta_t$ for some $m \in I_t$. If $X(x) = \emptyset$ then $X = \emptyset$, and hence by Proposition \ref{propo:emptyteam} we have that $T^{DL} \models_X t^{DL}_x(P)$, as required. Otherwise, by \textbf{non-triviality}, $P = Y_m \not = \emptyset$. Let now $p \in P$ be any of its elements and let $F(s) = p$ for all $p \in X[m/i]$: then $M \models_{X[m/i][F/y]} R_t(i, x, y)$, as any assignment of this team sends $x$ to some element of $X_m$ and $y$ to $p \in Y_m$. Furthermore, let $s \in X(x) = X_m$, and let $q$ be such that $R_t(m, s(x), q)$: then $q \in Y_m = P$, and hence $M \models_{X[m/i][T^{DL}/y]} \lnot R_t(i, x, y) \vee Py$. So, in conclusion, $M \models_X t^{DL}_x(P)$, as required.
\item $T^{DL} \models_X \phi^{DL}_x \wedge Px$ if and only if $T \models_{X(x)} \phi$ and $X(x) \subseteq P$, that is, if and only if $T \models_{X(x) \rightarrow P} \phi?$.
\item $T^{DL} \models_X (\tau_1)^{DL}_x(P) \vee (\tau_2)^{DL}_x(P)$ if and only if $X = X_1 \cup X_2$ for two $X_1, X_2$ such that 
\begin{itemize}
\item $X = X_1 \cup X_2$, and therefore $X(x) = X_1(x) \cup X_2(x)$;
\item $T^{DL} \models_{X_1} (\tau_1)^{DL}_x(P)$, that is, by induction hypothesis, $T \models_{X_1(x) \rightarrow P} \tau_1$; 
\item $T^{DL} \models_{X_2} (\tau_2)^{DL}_x(P)$, that is, by induction hypothesis, $T \models_{X_2(x) \rightarrow P} \tau_2$; 
\end{itemize}
Hence, if $T^{DL} \models_X (\tau_1 \otimes \tau_2)^{DL}_x(P)$ then $T \models_{X(x) \rightarrow P} \tau_1 \otimes \tau_2$. 

Conversely, if $X(x) = A \cup B$ for two $A$, $B$ such that $T \models_{A \rightarrow P} \tau_1$ and $T \models_{B \rightarrow P} \tau_2$, let 
\begin{eqnarray*}
X_1 &=& \{s \in X : s(x) \in A\}\\
X_2 &=& \{s \in X : s(x) \in B\}.
\end{eqnarray*}
Clearly $X = X_1 \cup X_2$, and furthermore by induction hypothesis $T^{DL} \models_{X_1} (\tau_1)^{DL}_x(P)$ and $T^{DL} \models_{X_2} (\tau_2)^{DL}_x(P)$. Hence, $T^{DL} \models_X (\tau_1 \otimes \tau_2)^{DL}_x(P)$, as required.
\item $T^{DL} \models_{X} (\tau_1 \cap \tau_2)^{DL}_x(P) $ if and only if $T^{DL} \models_X (\tau_1)^{DL}_x(P)$ and $T^{DL} \models_X (\tau_2)^{DL}_x(P)$, that is, by induction hypothesis, if and only if $T \models_{X(x) \rightarrow P} \tau_1 \cap \tau_2$.
\item $T^{DL} \models_X \exists Q((\tau_1)^{DL}_x(Q) \wedge \forall y (\lnot Qy \vee (\tau_2)^{DL}_y(P)))$ if and only if there exists a $Q$ such that $T \models_{X(x) \rightarrow Q} \tau_1$ and there exists a $Q' \supseteq Q$ such that $T \models_{Q' \rightarrow P} \tau_2$. By downwards closure, if this is the case then $T \models_{Q \rightarrow P} \tau_2$ too, and hence $T \models_{X(x) \rightarrow P} \tau_1;\tau_2$, as required. 

Conversely, suppose that there exists a $Q$ such that $T \models_{X(x) \rightarrow Q} \tau_1$ and $T \models_{Q \rightarrow P} \tau_2$. Then, by induction hypothesis $T^{DL} \models_{X} (\tau_1)^{DL}_x(Q)$; and furthermore, $X[T^{DL}/y]$ can be split into
\[
	Z_1 = \{s \in X[T^{DL}/y] : s(y) \not \in Q\}
\]
and 
\[
	Z_2 = \{s \in X[T^{DL}/y] : s(y) \in Q\}
\]
It is trivial to see that $T^{DL} \models_{Z_1} \lnot Qy$; and furthermore, since $Z_2(y) = Q$ and $T \models_{Q \rightarrow P} \tau_2$, by induction hypothesis we have that $T^{DL} \models_{Z_2} (\tau_2)^{DL}_y$. Thus $T^{DL} \models_{X[T^{DL}/y]} \forall y (\lnot Qy \vee (\tau_2)^{DL}_y(P))$ and finally $T^{DL} \models_X (\tau_1;\tau_2)^{DL}_x(P)$, and this concludes the proof.
\end{enumerate}
\end{proof}
Hence, the relationship between Transition Logic and Dependence Logic is analogous to the one between Dynamic Game Logic and First-Order Logic. In the next sections, we will develop variants of Dependence Logic which are syntactically closer to Transition Logic, while still being first-order: as we will see, the resulting frameworks are expressively equivalent to Dependence Logic on the level of satisfiability, but can be used to represent finer-grained phenomena of \emph{transitions} between sets of assignments. 
\section{Dynamic Variants of Dependence Logic}
\subsection{Dependence Logic and Transitions between Teams}
Now that we have established a connection between Dependence Logic and a variant of Dynamic Game Logic, it is time to explore what this might imply for the further development of logics of imperfect information. If, as Theorems \ref{theo:TL-Rep1} and \ref{theo:TL-Rep2} suggest, Dependence Logic can be thought of as a logic of imperfect-information decision problems, perhaps it could be possible to develop variants of Dependence Logic in which expressions can be interpreted directly as transition systems? 

In what follows, we will do exactly that, first with \emph{Transition Dependence Logic} -- a variant of Dependence Logic, expressively equivalent to it, which is also a quantified version of Transition Logic -- and then with \emph{Dynamic Dependence Logic}, in which \emph{all} expressions are interpreted as transitions! 

But why would we interested in such variants of Dependence Logic? One possible answer, which we will discuss in this subsection, is that transitions between teams are \emph{already} a central object of study in the field of Dependence Logic, albeit in a non-explicit manner: after all, the semantics of Dependence Logic interprets quantifiers in terms of transformations of teams, and disjunctions in terms of decompositions of teams into subteams. This intuition is central to the study of issues of interdefinability in Dependence Logic and its variants, like for example the ones discussed in \cite{galliani12}. As a simple example, let us recall Definition \ref{defin:classic_or}:
\[
	\psi_1 \sqcup \psi_2 := \exists u_1 \exists u_2 (=\!\!(u_1) \wedge =\!\!(u_2) \wedge ((u_1 = u_2 \wedge \psi_1) \vee (u_1 \not = u_2 \wedge \psi_2))),
\]
where $u_1$ and $u_2$ are new variables.

As we said in Proposition \ref{propo:classic_or}, $M \models_X \psi_1 \sqcup \psi_2$ if and only if $M \models_X \psi_1$ or $M \models_X \psi_2$. We will now sketch the proof of this result, and -- as we will see -- this proof will hinge on the fact that the above expression can be read as a specification of the following algorithm:
\begin{enumerate}
\item Choose an element $a \in \dom(M)$ and extend the team $X$ by assigning $a$ as the value of $u_1$ for all assignments;
\item Choose an element $b \in \dom(M)$ and further extend the team by assigning $b$ as the value of $u_2$ for all assignments;
\item Split the resulting team into two subteams $Y_1$ and $Y_2$ such that 
\begin{enumerate}
\item $\psi_1$ holds in $Y_1$, and the values of $u_1$ and $u_2$ coincide for all assignments in it;
\item $\psi_2$ holds in $Y_2$, and the values of $u_1$ and $u_2$ differ for all assignments in it.
\end{enumerate}
\end{enumerate}
Since the values of $u_1$ and $u_2$ are chosen to always be respectively $a$ and $b$, one of $Y_1$ and $Y_2$ is empty and the other is of the form $X[ab/u_1u_2]$, and since $u_1$ and $u_2$ do not occur in $\psi_1$ or $\psi_2$ the above algorithm can succeed (for some choice of $a$ and $b$) only if $M \models_X \psi_1$ or $M \models_X \psi_2$.

As another, slightly more complicated example, let us consider the following problem. Given four variables $x_1$, $x_2$, $y_1$ and $y_2$, let $x_1 x_2 ~|~ y_1 y_2$ be an \emph{exclusion atom} holding in a team $X$ if and only if for all $s, s' \in X$, $s(x_1 x_2) \not = s'(y_1 y_2)$ -- that is, if and only if the sets of the values taken by $x_1 x_2$ and by $y_1 y_2$ in $X$ are disjoint. 

By Theorem \ref{SigmaToDL}, we can tell at once that there exists some Dependence Logic formula $\phi(x_1, x_2, y_1, y_2)$ such that for all suitable $M$ and $X$, $M \models_X \phi(x_1, x_2, y_1, y_2)$ if and only if $M \models_X x_1 x_2 ~|~ y_1 y_2$; but what about the converse? For example, can we find an expression $\psi(x, y)$, in the language of First Order Logic augmented with these exclusion atoms (but with no dependence atoms), such that for all suitable $M$ and $X$ $M \models_{X} \psi(x, y)$ if and only if $M \models_X =\!\!(x,y)$?

As discussed in \cite{galliani12} in a more general setting, the answer is positive, and one such $\psi(x, y)$ is $\forall z (z = y \vee (z \not = y \wedge x z ~|~ x y))$, where $z$ is some variable other than $x$ and $y$.\footnote{A moment's thought shows that, by downwards closure, the condition $z \not = y$ in the second disjunct can be removed, but for simplicity we will keep it.} Why is this the case? 

Well, let us consider any team $X$ with domain containing $x$ and $y$, and let us evaluate $\psi(x,y)$ over it. As shown graphically in Figure \ref{fig:F1}, the transitions between teams occurring during the evaluation of the formula correspond to the following algorithm: 
\begin{enumerate}
\item First, assign all possible values to the variable $z$ for all assignments in $x$, thus obtaining $X[M/z] = \{s[m/z] : s \in X, m \in \dom(M)\}$; 
\item Then, remove from $X[M/z]$ all assignments $s$ for which $s(z) = s(x)$, keeping only the ones for which $s(z) \not = s(y)$; 
\item Then, verify that for any possible fixed value of $x$, the possible values of $y$ and $z$ are disjoint. 
\end{enumerate}
This algorithm succeeds only if $y$ is a function of $x$. Indeed, suppose that instead there are two assignments $s, s' \in X$ such that $s(x) = s'(x) = a$, $s(y) = b$ and $s'(y) = c$ for three $a, b, c \in \dom(M)$ with $b \not = c$. Now we have that $\{s[b/z], s[c/z], s'[b/z], s'[c/z]\} \subseteq X[M/z]$: and since $b \not = c$, we have that the assignments $s[c/z]$ and $s'[b/z]$ are not removed from the team in the second step of the proof. But then $s[c/z](xz) = a c = s'[b/z](xy)$, and therefore it is not true that $xy ~|~ xz$. And, conversely, if in the team $X$ the value of $y$ is a function of the value of $x$ then by splitting $X[M/z]$ into the two subteams $Y = \{s[m/z] : s \in X, s(y) = s(z)\}$ and $Z = \{s[m/z] : s(y) \not = s(z)\}$ we have that $M \models_Y y = z$, $M \models_Z y \not = z$ and $M \models_Z xz ~|~ xy$ (since for all $s, s' \in Z$, $s(x) = s'(x) \Rightarrow s(y) = s'(y) \Rightarrow s(z) \not = s(y) = s'(z)$). 

\begin{figure}
\center{
	\epsfig{file=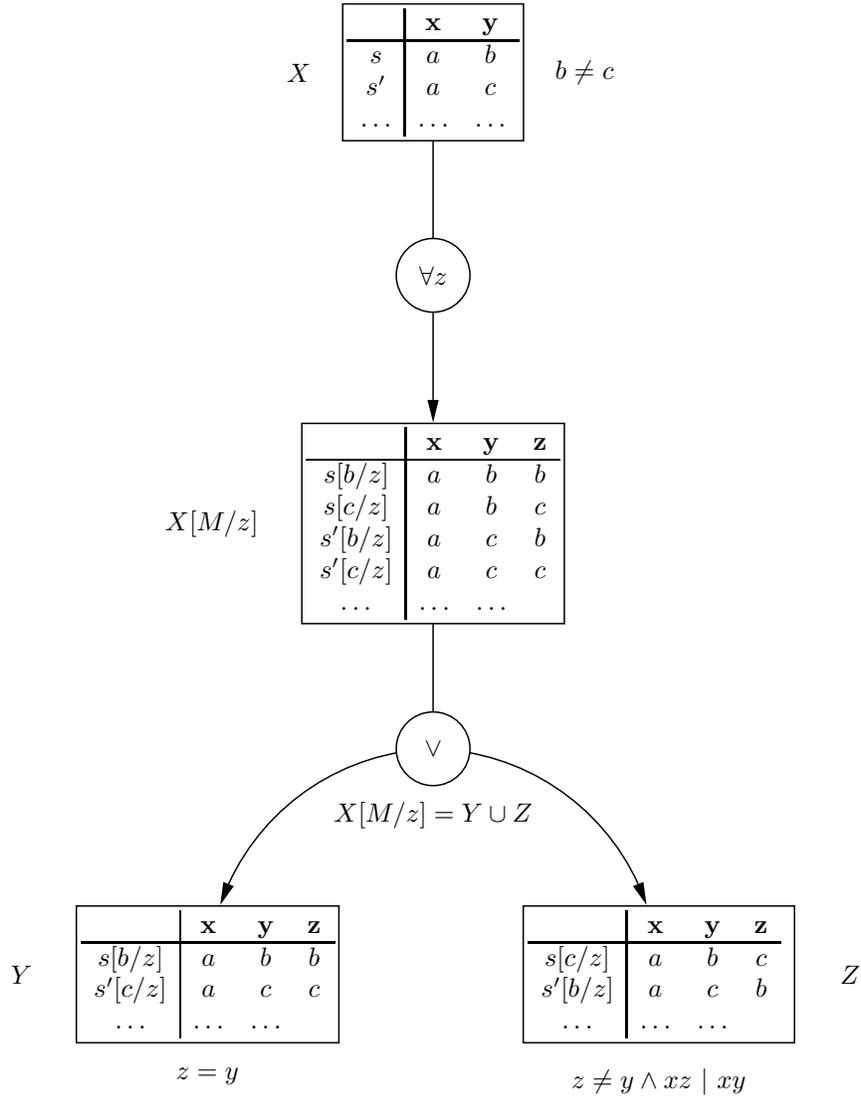}
%\begin{graph}(8, 16)
%\textnode{T1}(4,15){$
%\begin{array}{c | c c}
%& \mathbf{x} & \mathbf{y}\\
%\hline
%s & a & b\\
%s' & a & c\\
%\ldots & \ldots & \ldots
%\end{array}$}
%\autonodetext{T1}[w]{$X$}
%\autonodetext{T1}[e]{$b \not = c$}
%\roundnode{Fz}(4, 12.3)[\graphnodesize{1} \graphnodecolour{1}]
%\autonodetext{Fz}{$\forall z$}
%\textnode{T2}(4, 9){$\begin{array}{c | c c c}
%& \mathbf{x} & \mathbf{y} & \mathbf{z}\\
%\hline
%s[b/z] & a & b & b\\
%s[c/z] & a & b & c\\
%s'[b/z] & a & c & b\\
%s'[c/z] & a & c & c\\
%\ldots & \ldots & \ldots
%\end{array}$} 
%\autonodetext{T2}[w]{$X[M/z]$}
%\roundnode{Vee}(4, 6)[\graphnodesize{1} \graphnodecolour{1}]
%\autonodetext{Vee}{$\vee$}
%\autonodetext{Vee}[s]{$X[M/z] = Y \cup Z$}
%\textnode{T3}(1, 3){$\begin{array}{c | c c c}
%& \mathbf{x} & \mathbf{y} & \mathbf{z}\\
%\hline
%s[b/z] & a & b & b\\
%s'[c/z] & a & c & c\\
%\ldots & \ldots & \ldots
%\end{array}$} 
%\autonodetext{T3}[s]{$z=y$}
%\autonodetext{T3}[w]{$Y$}
%\textnode{T4}(7, 3){$\begin{array}{c | c c c}
%& \mathbf{x} & \mathbf{y} & \mathbf{z}\\
%\hline
%s[c/z] & a & b & c\\
%s'[b/z] & a & c & b\\
%\ldots & \ldots & \ldots
%\end{array}$
%}
%\autonodetext{T4}[s]{$z \not = y \wedge xz ~|~ xy$} 
%\autonodetext{T4}[e]{$Z$}
%\edge{T1}{Fz}
%\diredge{Fz}{T2}
%\edge{T2}{Vee}
%\dirbow{Vee}{T3}{-0.2}
%\dirbow{Vee}{T4}{0.2}
%\end{graph}
}
\caption{Checking $=\!\!(x,y)$ by evaluating $\forall z (z = y \vee (z \not = y \wedge xz ~|~ xy))$. If $b \not = c$, then $Z$ does not satisfy the $xz ~|~ xy$.}
\label{fig:F1}
\end{figure}

On the other hand, one Dependence Logic expression corresponding to $x_1 x_2 ~|~ y_1 y_2$ is 
\[
\begin{array}{l}
 \forall w_1 w_2 \exists u_1 u_2 (=\!\!(w_1,w_2,u_1) \wedge =\!\!(w_1, w_2, u_2) \wedge\\
 ~ ~ ~ ((u_1 = u_2 \wedge (w_1 \not = x_1 \vee w_2 \not = z_2)) \vee\\
~ ~ ~ ~ (u_1 \not = u_2 \wedge (w_1 \not = y_1 \vee w_2 \not = y_2))))
\end{array}
\]
where $w_1$, $w_2$, $u_1$ and $u_2$ are new variable. 

We encourage the interested reader to verify that this is the case by examining the transitions between teams corresponding to the formula: in brief, the intuition is that first we extend our team by picking all possible pairs of values for $w_1$ and $w_2$, then for any such pair we flag -- through our choice of $u_1$ and $u_2$ -- whether $w_1 w_2$ is different from $x_1 x_2$ or from $y_1 y_2$. This implies that no such pair is equal to both $x_1 x_2$ and $y_1 y_2$, or, in other words, that $x_1 x_2$ and $y_1 y_2$ have no value in common. 

More and more complex examples of definability results of this kind can be found in \cite{galliani12}; but what we want to emphasize here is that all these examples, like the one we discussed in depth here, have a natural interpretation in terms of algorithms which transform teams and apply simple tests to them, as the above one. Hence, we hope that the development of variants of Dependence Logic in which these transitions are made explicit might prove itself useful for the further study of this interesting class of problems.
\subsection{Transition Dependence Logic}
\label{subsect:TDL}
%Just as van Benthem's representation theorem, which we recalled in Subsection \ref{subsect:DGL-Rep}, allows one to reinterpret First Order Logic as a logic of perfect information two-player games, Theorems \ref{theo:TL-Rep1} and \ref{theo:TL-Rep2} of Subsection \ref{subsect:TL-Rep} permit us to understand Dependence Logic as a logic of imperfect-information decision problems. 
%
%However, the language of Dependence Logic, as described in Subsection \ref{subsect:DL}, does very little to support this interpretation. We may certainly associate a Dependence Logic sentence such as, for example, $\forall x \exists y (=\!\!(y, f(x)) \wedge Pxy)$, to a certain game of imperfect information, and then establish a correspondence between the truth of the sentence and certain abilities of an agent in this game; but this interpretation - legitimate though it may be from a semantic perspective - does not appear to arise entirely naturally from the syntactical structure of the sentence. 
%
%But by exploiting of the representation of Dependence Logic inside of Transition Logic of Definitions \ref{defin:DL2TL-mod} and \ref{defin:DL2TL-form}, it is not difficult to define a variant of Dependence Logic in which this interpretation is manifested at the syntactical level itself:
As stated, we will now define a variant of Dependence Logic which can also be seen as a quantified variant of Transition Logic. We will then prove that the resulting Transition Dependence Logic is expressively equivalent to Dependence Logic, in the sense that any Dependence Logic formula is equivalent to some Transition Dependence Logic formula and vice versa.
\begin{defin}[Transition Dependence Logic - Syntax]
Let $\Sigma$ be a first-order signature. Then the sets of all \emph{transition terms} and of all \emph{formulas} of Dependence Transition Logic are given by the rules
\begin{eqnarray*}
\tau &::=& \exists v ~|~ \forall v ~|~ \phi? ~|~ \tau \otimes \tau ~|~ \tau \cap \tau ~|~ \tau;\tau\\
\phi &::=& R\tuple t ~|~ \lnot R \tuple t ~|~ =\!\!(t_1, \ldots, t_n) ~|~ \phi \vee \phi ~|~ \phi \wedge \phi ~|~ \langle \tau \rangle \phi.
\end{eqnarray*}
where $v$ ranges over all variables in $\var$, $R$ ranges over all relation symbols of the signature, $\tuple t$ ranges over all tuples of terms of the required arities, $n$ ranges over $\mathbb N$ and $t_1 \ldots t_n$ range over the terms of our signature. 
\end{defin}
\begin{defin}[Transition Dependence Logic - Semantics] 
Let $M$ be a first-order model, let $\tau$ be a first-order transition term of the same signature, and let $X$ and $Y$ be teams over $M$. Then we say that the transition $X\rightarrow Y$ is \emph{allowed} by $\tau$ in $M$, and we write $M \models_{X \rightarrow Y} \tau$, if and only if 
\begin{description}
\item[TDL-$\exists$: ] $\tau$ is of the form $\exists v$ for some $v \in \var$ and there exists a $F$ such that $X[F/v]\subseteq Y$;
\item[TDL-$\forall$: ] $\tau$ is of the form $\forall v$ for some $v \in \var$ and $X[M/v] \subseteq Y$;
\item[TDL-test: ] $\tau$ is of the form $\phi?$, $M \models_X \phi$ in the sense given later in this definition, and $X \subseteq Y$;
\item[TDL-$\otimes$: ] $\tau$ is of the form $\tau_1 \otimes \tau_2$ and $X = X_1 \cup X_2$ for some $X_1$ and $X_2$ such that $M \models_{X_1 \rightarrow Y} \tau_1$ and $M \models_{X_2 \rightarrow Y} \tau_2$; 
\item[TDL-$\cap$: ] $\tau$ is of the form $\tau_1 \cap \tau_2$, $M \models_{X \rightarrow Y} \tau_1$ and $M \models_{X\rightarrow Y} \tau_2$; 
\item[TDL-concat: ] $\tau$ is of the form $\tau_1;\tau_2$ and there exists a team $Z$ such that $M \models_{X \rightarrow Z} \tau_1$ and $M \models_{Z \rightarrow Y} \tau_2$.
\end{description}
Similarly, if $\phi$ is a formula and $X$ is a team with domain $\var$. Then we say that $X$ \emph{satisfies} $\phi$ in $M$, and we write $M \models_X \phi$, if and only if 
\begin{description}
\item[TDL-lit: ] $\phi$ is a first-order literal and $M \models_s \phi$ in the usual first-order sense for all $s \in X$; 
\item[TDL-dep: ] $\phi$ is a dependence atom $=\!\!(t_1, \ldots, t_n)$ and any two $s, s' \in X$ which assign the same values to $t_1 \ldots t_{n-1}$ also assign the same value to $t_n$; 
\item[TDL-$\vee$: ] $\phi$ is of the form $\phi_1 \vee \phi_2$ and $M \models_X \phi_1$ or $M \models_X \phi_2$; 
\item[TDL-$\wedge$: ]$\phi$ is of the form $\phi_1 \wedge \phi_2$, $M \models_X \phi_1$ and $M \models_X \phi_2$; 
\item[TDL-$\diamond$: ] $\phi$ is of the form $\langle \tau \rangle \psi$ and there exists a $Y$ such that $M \models_{X \rightarrow Y} \tau$ and $M \models_Y \psi$. 
\end{description}
\end{defin}
As the next theorem shows, in this semantics formulas and transitions are interpreted in terms of trumps and transition systems:
\begin{theo}
For all Transition Dependence Logic formulas $\phi$, all models $M$ and all teams $X$ and $Y$, we have that 
\begin{description}
\item[Downwards Closure:] If $M \models_X \phi$ and $Y \subseteq X$ then $M \models_Y \phi$;
\item[Empty Team Property:] $M \models_\emptyset \phi$. 
\end{description}
Furthermore, for all Transition Dependence Logic transition terms $\tau$, all models $M$ and all teams $X$, $Y$ and $Z$,
\begin{description}
\item[Downwards Closure: ] If $M \models_{X \rightarrow Y} \tau$ and $Z \subseteq X$ then $M \models_{Z \rightarrow Y} \tau$;
\item[Monotonicity: ] If $M \models_{X \rightarrow Y} \tau$ and $Y \subseteq Z$ then $M \models_{X \rightarrow Z} \tau$;
\item[Non-creation: ] For all $Y$, $M \models_{\emptyset \rightarrow Y} \tau$;
\item[Non-triviality: ] If $X \not = \emptyset$ then $M \not \models_{X \rightarrow \emptyset} \tau$.
\end{description}
\end{theo}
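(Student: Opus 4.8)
The plan is to prove all six properties simultaneously by a single structural induction on the syntactic complexity of $\tau$ and $\phi$, since the formula clauses and the transition-term clauses are mutually recursive (a test $\phi?$ refers to satisfaction of $\phi$, and a diamond $\langle\tau\rangle\psi$ refers to the transition relation of $\tau$). First I would set up the induction so that, at each stage, I may assume Downwards Closure and the Empty Team Property for all proper subformulas, and Downwards Closure, Monotonicity, Non-creation and Non-triviality for all proper sub-transition-terms. The base cases are the atomic ones: for $\tau = \exists v$ and $\tau = \forall v$ one checks the four transition properties directly from the definitions of $X[F/v]$ and $X[M/v]$ (e.g.\ Downwards Closure because $Z\subseteq X$ implies $Z[F/v]\subseteq X[F/v]\subseteq Y$; Non-creation because $\emptyset[F/v]=\emptyset\subseteq Y$; Monotonicity is immediate; Non-triviality holds because $X[F/v]$ and $X[M/v]$ are nonempty whenever $X$ is). For the atomic formula cases (literals and dependence atoms) Downwards Closure and the Empty Team Property are exactly Propositions stated for Dependence Logic in the excerpt (\ref{propo:emptyteam} and the Downwards Closure proposition), or else follow by the same one-line arguments, since the satisfaction clauses TDL-lit and TDL-dep are the familiar ones.

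Next I would handle the inductive steps for formulas. For $\phi = \phi_1 \vee \phi_2$ (note this is the \emph{classical} disjunction of Transition Dependence Logic, clause TDL-$\vee$), Downwards Closure and the Empty Team Property follow immediately from the corresponding properties of $\phi_1$ and $\phi_2$. For $\phi = \phi_1 \wedge \phi_2$ the same is true using TDL-$\wedge$. The interesting formula case is $\phi = \langle\tau\rangle\psi$: if $M\models_X \phi$ there is $Y$ with $M\models_{X\to Y}\tau$ and $M\models_Y\psi$; for Downwards Closure, given $Z\subseteq X$, apply Downwards Closure of $\tau$ to get $M\models_{Z\to Y}\tau$, and the same witness $Y$ works; for the Empty Team Property, use Non-creation of $\tau$ to get $M\models_{\emptyset\to Y}\tau$ for any fixed $Y$ that works (or simply take $Y$ to be whatever makes $\psi$ hold, using the Empty Team Property of $\psi$ for, say, $Y=\emptyset$ combined with Non-creation).

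Then the inductive steps for transition terms. For $\tau = \phi?$: Downwards Closure follows because $M\models_X\phi$ and $Z\subseteq X$ give $M\models_Z\phi$ (induction hypothesis on $\phi$) and $Z\subseteq X\subseteq Y$; Monotonicity is trivial; Non-creation uses the Empty Team Property of $\phi$ plus $\emptyset\subseteq Y$; Non-triviality holds because $X\subseteq Y=\emptyset$ forces $X=\emptyset$. For $\tau = \tau_1\otimes\tau_2$: given a decomposition $X = X_1\cup X_2$ witnessing the transition, intersecting with $Z$ gives $Z = (X_1\cap Z)\cup(X_2\cap Z)$ and Downwards Closure of $\tau_1,\tau_2$ finishes Downwards Closure; Monotonicity passes through both components; Non-creation takes $X_1=X_2=\emptyset$; Non-triviality needs care — if $X\neq\emptyset$ then at least one $X_i\neq\emptyset$, and Non-triviality of that $\tau_i$ prevents $Y=\emptyset$. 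For $\tau = \tau_1\cap\tau_2$ all four properties are conjunctive and pass through directly. For $\tau = \tau_1;\tau_2$: Downwards Closure uses the intermediate team $Z'$ with $M\models_{X\to Z'}\tau_1$, $M\models_{Z'\to Y}\tau_2$ and Downwards Closure of $\tau_1$; Monotonicity uses Monotonicity of $\tau_2$; Non-creation composes two Non-creation steps; Non-triviality is the subtle one — if $X\neq\emptyset$, Non-triviality of $\tau_1$ gives $Z'\neq\emptyset$, and then Non-triviality of $\tau_2$ gives $Y\neq\emptyset$.

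The main obstacle, as the sketch above signals, is bookkeeping the Non-triviality clause through composition and through $\otimes$, because it is the one property whose proof genuinely chains the inductive hypotheses of two subterms rather than just inheriting termwise; one has to be careful that the witnessing intermediate team is nonempty before invoking Non-triviality of the second component. Everything else is a routine unwinding of definitions, so I would present Non-triviality for $;$ and $\otimes$ in full and treat the remaining combinations tersely.
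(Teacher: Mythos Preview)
Your proposal is correct and follows exactly the approach the paper takes: the paper's own proof reads, in its entirety, ``The proof is by structural induction over $\phi$ and $\tau$, and presents no difficulties whatsoever.'' You have simply spelled out the routine verification that the paper omits, and your handling of the only mildly delicate cases (Non-triviality for $\tau_1;\tau_2$ and $\tau_1\otimes\tau_2$) is accurate.
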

\begin{proof}
The proof is by structural induction over $\phi$ and $\tau$, and presents no difficulties whatsoever.
\end{proof}

Also, it is not difficult to see, on the basis of the results of the previous section, that this new variant of Dependence Logic is equivalent to the usual one:
\begin{theo}
For every Dependence Logic formula $\phi$ there exists a Transition Dependence Logic transition term $\tau_\phi$ such that 
\[
	M \models_X \phi \Leftrightarrow \exists Y \mbox{ s.t. } M \models_{X \rightarrow Y} \tau_\phi \Leftrightarrow M \models_X \langle \tau_\phi\rangle \top
\]
for all first-order models $M$ and teams $X$. 
\end{theo}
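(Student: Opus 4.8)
The plan is to adapt, almost verbatim, the translation of Definition~\ref{defin:DL2TL-form} together with the inductive argument of Theorem~\ref{theo:TL-Rep1}: in Transition Dependence Logic the atomic transitions $\exists v$, $\forall v$ and the tests $\phi?$ are primitive and carry exactly the semantics of the transition systems $\theta_{\exists v}$, $\theta_{\forall v}$ and of the valuation used in the model $M^{TL}$, so the same translation works here ``internally''.

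First I would define $\tau_\phi$ by structural recursion on $\phi$: put $\tau_\phi := \phi?$ when $\phi$ is a first-order literal or a dependence atom; $\tau_{\psi_1 \vee \psi_2} := \tau_{\psi_1} \otimes \tau_{\psi_2}$; $\tau_{\psi_1 \wedge \psi_2} := \tau_{\psi_1} \cap \tau_{\psi_2}$; $\tau_{\exists v \psi} := \exists v ; \tau_\psi$; and $\tau_{\forall v \psi} := \forall v ; \tau_\psi$. Two of these choices are forced: the \emph{team} disjunction of Dependence Logic must be translated by the tensor $\otimes$ (not by the disjunction of Transition Dependence Logic formulas, which is classical), and conjunction must be translated by $\cap$ since $\tau_\phi$ is required to be a transition \emph{term} rather than a formula.

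Then I would prove, by structural induction on $\phi$, the single equivalence
\[
	M \models_X \phi \iff \exists Y \text{ s.t. } M \models_{X \rightarrow Y} \tau_\phi .
\]
The equivalence with $M \models_X \langle \tau_\phi \rangle \top$ is then immediate: by \textbf{TDL-$\diamond$} the latter holds exactly when some $Y$ satisfies both $M \models_{X \rightarrow Y} \tau_\phi$ and $M \models_Y \top$, and $\top$ (read as any validity, e.g.\ $v = v$) is satisfied by every team. For the induction itself: in the atomic case $M \models_{X \rightarrow Y} \phi?$ iff $M \models_X \phi$ and $X \subseteq Y$, so $Y := X$ witnesses one direction and the clause $X \subseteq Y$ gives the other. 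For $\psi_1 \vee \psi_2$, a decomposition $X = X_1 \cup X_2$ with $M \models_{X_i} \psi_i$ yields, via the induction hypothesis, targets $Y_i$ with $M \models_{X_i \rightarrow Y_i} \tau_{\psi_i}$, and \textbf{Monotonicity} pushes both transitions into $Y := Y_1 \cup Y_2$; conversely a witnessing $Y$ supplies a split realising $M \models_X \psi_1 \vee \psi_2$. The case of $\wedge$ is the same, \textbf{Monotonicity} again being used to merge the two target teams into one. For $\exists v \psi$, from $M \models_{X[F/v]} \psi$ one obtains $Y$ with $M \models_{X[F/v] \rightarrow Y} \tau_\psi$, and since $M \models_{X \rightarrow X[F/v]} \exists v$ trivially (with the same $F$), $M \models_{X \rightarrow Y} \exists v ; \tau_\psi$; conversely a witness gives some $Z \supseteq X[F/v]$ and $Y$ with $M \models_{Z \rightarrow Y} \tau_\psi$, and \textbf{Downwards Closure} of transition terms yields $M \models_{X[F/v] \rightarrow Y} \tau_\psi$, hence $M \models_{X[F/v]} \psi$ and $M \models_X \exists v \psi$. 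The $\forall v$ case is identical with $X[M/v]$ in place of $X[F/v]$.

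There is no genuine obstacle here; the only points needing attention are the two translation choices flagged above, and the use in the quantifier cases of the \textbf{Downwards Closure} property of transition terms (from the theorem immediately preceding this one) to reconcile the Dependence Logic clause ``$M \models_{X[F/v]} \psi$'' with the Transition Dependence Logic pattern ``$X[F/v] \subseteq Z$, then run $\tau_\psi$ from $Z$''. All the required ingredients are already established in the excerpt.
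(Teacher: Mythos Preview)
Your proposal is correct and follows exactly the paper's approach: the translation $\tau_\phi$ you define is identical to the one the paper gives, and the paper then simply declares the inductive verification ``trivial'' without spelling out the cases. Your write-up in fact supplies more detail than the paper does (the explicit use of \textbf{Monotonicity} to merge target teams in the $\vee$ and $\wedge$ cases, and of \textbf{Downwards Closure} in the quantifier cases), but the underlying argument is the same.
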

\begin{proof}
$\tau_\phi$ is defined by structural induction on $\phi$, as follows: 
\begin{enumerate}
\item If $\phi$ is a first-order literal or a dependence atom then $\tau_\phi = \phi?$; 
\item If $\phi$ is $\phi_1 \vee \phi_2$ then $\tau_\phi = \tau_{\phi_1} \otimes \tau_{\phi_2}$; 
\item If $\phi$ is $\phi_1 \wedge \phi_2$ then $\tau_\phi = \tau_{\phi_1} \cap \tau_{\phi_2}$; 
\item If $\phi$ is $\exists v \psi$ then $\tau_\phi = \exists v; \tau_{\psi}$; 
\item If $\phi$ is $\forall v \psi$ then $\tau_\phi = \forall v; \tau_{\psi}$.
\end{enumerate}
It is then trivial to verify, again by induction on $\phi$, that $M \models_X \phi$ if and only if $M \models_{X} \langle \tau_\phi\rangle \top$, as required.
\end{proof}	
This representation result associates Dependence Logic \emph{formulas} to Transition Dependence Logic \emph{transition terms}. This fact highlights the dynamical nature of Dependence Logic operators, which we discussed in the previous subsection: in this framework, quantifiers describe \emph{transformations} of teams, the Dependence Logic connectives are operations over games, and the literals are interpreted as tests. In fact, one might wonder what is the purpose of Transition Dependence Logic formulas: could we do away with them altogether, and develop a variant of Transition Dependence Logic in which \emph{all} formulas are transitions? 

Later, we will explore this idea further; but first, let us verify that Transition Dependence Logic is no more expressive than Dependence Logic.
%What about the converse? Does every Transition Dependence Logic formula correspond to some Dependence Logic formula? And is there any way of translating \emph{transitions} into Dependence Logic expressions? 
%
%As we will now see, this is indeed the case. 
%
%
\begin{theo}
For every Transition Dependence Logic formula $\phi$ there exists a Dependence Logic formula $T(\phi)$ such that 
\[
	M \models_X \phi \Leftrightarrow M \models_X T(\phi)
\]
for all first-order models $M$ and teams $X$. Furthermore, for every Transition Dependence Logic transition term $\tau$ and Dependence Logic formula $\theta$ there is a Dependence Logic formula $U(\tau, \psi)$ such that 
\[
	M \models_X U(\tau, \theta) \Leftrightarrow \exists Y \mbox{ s.t. } M \models_{X \rightarrow Y} \tau \mbox{ and } M \models_Y \theta,
\]
again for all first-order models $M$ and teams $X$.
\end{theo}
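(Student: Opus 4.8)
The plan is to define, by a single well-founded induction on the syntax of Transition Dependence Logic, both the map $T$ on formulas and the map $U$ on pairs (transition term, Dependence Logic formula), mimicking the shape of the translations built for Theorems \ref{theo:TL-Rep1} and \ref{theo:TL-Rep2}. For $T$ the clauses are forced: a first-order literal or dependence atom already is a Dependence Logic formula, so $T(\phi)=\phi$; since the disjunction of Transition Dependence Logic is \emph{classical} ($M\models_X\phi_1\vee\phi_2$ iff $M\models_X\phi_1$ or $M\models_X\phi_2$), we put $T(\phi_1\vee\phi_2)=T(\phi_1)\sqcup T(\phi_2)$ with the classical disjunction of Definition \ref{defin:classic_or} (valid by Proposition \ref{propo:classic_or}, using that our models have at least two elements); conjunction goes to conjunction; and $T(\langle\tau\rangle\psi)=U(\tau,T(\psi))$, which offloads everything onto $U$.

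For $U(\tau,\theta)$, which is to satisfy $M\models_X U(\tau,\theta)\Leftrightarrow\exists Y\,(M\models_{X\to Y}\tau\ \&\ M\models_Y\theta)$, the atomic and sequential clauses are clean and all turn on downwards closure of Dependence Logic formulas: since ``$\exists Y\supseteq X[F/v]$ with $M\models_Y\theta$'' is equivalent, by downwards closure (take $Y=X[F/v]$), to $M\models_{X[F/v]}\theta$, one sets $U(\exists v,\theta)=\exists v\,\theta$ and likewise $U(\forall v,\theta)=\forall v\,\theta$; similarly $U(\phi?,\theta)=T(\phi)\wedge\theta$ (take $Y=X$); and concatenation gives the pleasant identity $U(\tau_1;\tau_2,\theta)=U(\tau_1,U(\tau_2,\theta))$, which unwinds at once from the two induction hypotheses and the definition of $;$. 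The claims about $T$ then follow formally: the $\langle\tau\rangle\psi$ case of the equivalence $M\models_X\phi\Leftrightarrow M\models_X T(\phi)$ is exactly the definition of $U$ instantiated at $\theta=T(\psi)$, and the other cases of $T$ are immediate.

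The genuine obstacle is $U(\tau_1\cap\tau_2,\theta)$ and $U(\tau_1\otimes\tau_2,\theta)$: the naive guesses $U(\tau_1,\theta)\wedge U(\tau_2,\theta)$ and $U(\tau_1,\theta)\vee U(\tau_2,\theta)$ are \emph{wrong}, because the semantics of $\cap$ and $\otimes$ require a single common target team $Y$, whereas two separate applications of $U$ produce two targets $Y_1,Y_2$ that cannot be merged in general — by monotonicity $M\models_{\cdot\to Y_i}\tau_i$ upgrades to $M\models_{\cdot\to Y_1\cup Y_2}\tau_i$, but Dependence Logic formulas are only downwards closed, not union-closed, so $M\models_{Y_1\cup Y_2}\theta$ may fail (e.g.\ $\theta$ a dependence atom). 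I would resolve this by making the target team explicit and quantifying it away second-order, which is legitimate since, as recalled in Subsection \ref{subsect:DL}, Dependence Logic is closed under existential second-order quantification. Concretely: first define, by recursion on $\tau$, a Dependence Logic formula $\reach_\tau(\tuple v,P)$ ($P$ a fresh relation symbol whose arity is the length of the tuple $\tuple v$ of relevant variables) with $M\models_X\reach_\tau(\tuple v,P)\Leftrightarrow M\models_{X\to Y}\tau$ for $Y$ the team coded by $P$, copying the clauses $t^{DL}_x(P)$, $(\tau_1\otimes\tau_2)^{DL}_x(P)=\ldots\vee\ldots$, $(\tau_1\cap\tau_2)^{DL}_x(P)=\ldots\wedge\ldots$, $(\tau_1;\tau_2)^{DL}_x(P)$ from the translation preceding Theorem \ref{theo:TL-Rep2} (the $;$-clause again using an extra $\exists Q$ and the universal-restriction trick $\forall\tuple w(\lnot Q\tuple w\vee\ldots)$, and the $\phi?$-clause invoking $T(\phi)\wedge P\tuple v$); then define a relativization asserting ``the team coded by $P$ satisfies $\theta$''; and finally set, uniformly for every $\tau$, $U(\tau,\theta)=\exists P\,(\reach_\tau(\tuple v,P)\wedge(\text{``}\theta\text{ holds of }P\text{''}))$, the earlier ``clean'' clauses being merely the optimizations that happen not to need the extra $\exists P$. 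A shorter alternative is to prove, by the same induction, that ``$M\models_{X\to Y}\tau$'' and ``$M\models_X\phi$'' are $\Sigma^1_1$-expressible in the relations $X(\tuple v)$ and $Y(\tuple v)$ with the \emph{source} relation occurring only negatively, combine this with Proposition \ref{DLToSigma} applied to $\theta$, existentially quantify the target relation, and invoke Theorem \ref{SigmaToDL} to pull the result back into Dependence Logic. Either way, one must dispatch the empty-team cases separately (using the empty-team property of both logics and the non-creation axiom, noting that a transition out of a nonempty team lands in a nonempty team by non-triviality), and one must check polarities and the bookkeeping of variable domains carefully; that routine checking, rather than any further conceptual point, is where the proof becomes tedious.
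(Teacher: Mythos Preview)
Your proposal is correct, and for the atomic transitions, tests, concatenation, and all the $T$-clauses it coincides with the paper's proof (your $T(\langle\tau\rangle\psi)=U(\tau,T(\psi))$ is in fact slightly cleaner than the paper's choice, which unnecessarily inserts an extra $\exists R$ at that point). Where you diverge is precisely at $\otimes$ and $\cap$: you correctly diagnose that the naive $U(\tau_1,\theta)\vee U(\tau_2,\theta)$ and $U(\tau_1,\theta)\wedge U(\tau_2,\theta)$ fail because the two witnessing targets $Y_1,Y_2$ cannot be merged when $\theta$ is not union-closed, and you fix this by introducing an auxiliary predicate $\reach_\tau(\tuple v,P)$ (or by detouring through $\Sigma^1_1$ and Theorem \ref{SigmaToDL}). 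The paper's route is shorter: it observes that your $\reach_\tau(\tuple v,P)$ is nothing other than $U(\tau,P\tuple v)$ itself, with the first-order atom $P\tuple v$ in the role of the target constraint. Since $P\tuple v$ \emph{is} union-closed, the two inductively obtained targets $Y_1,Y_2$ both satisfy $P\tuple v$, hence so does $Y=Y_1\cup Y_2$; monotonicity then gives $M\models_{X_i\to Y}\tau_i$, and the side clause $\forall\tuple v(\lnot P\tuple v\vee\theta)$ together with locality and downwards closure yields $M\models_Y\theta$. Thus the paper sets
\[
U(\tau_1\cap\tau_2,\theta)=\exists R\bigl(U(\tau_1,R\tuple v)\wedge U(\tau_2,R\tuple v)\wedge\forall\tuple v(\lnot R\tuple v\vee\theta)\bigr)
\]
and analogously for $\otimes$ with the inner $\wedge$ replaced by $\vee$. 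Your approach buys uniformity (one formula schema $\exists P(\reach_\tau\wedge\ldots)$ for all $\tau$) at the cost of a second induction; the paper's buys economy by reusing $U$ as its own ``reach'' predicate, so no auxiliary definition or appeal to Theorem \ref{SigmaToDL} is needed.
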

\begin{proof}
We prove the two claims together, by structural induction over $\phi$ and $\tau$. 

First, let us consider the cases corresponding to formulas: 
\begin{enumerate}
\item If $\phi$ is a first order literal or a dependence atom, let $T(\phi)$ be $\phi$ itself. As the interpretation of these expressions is the same in Dependence Logic and in Transition Dependence Logic, there is nothing to prove. 
\item If $\phi$ is of the form $\psi_1 \vee \psi_2$, let $T(\phi)$ be $T(\psi_1) \sqcup T(\psi_2)$. 
%$\exists u_1 u_2 (=\!\!(u_1) \wedge =\!\!(u_2) \wedge ((u_1 = u_2 \wedge T(\psi_1)) \vee (u_1 \not = u_2 \wedge T(\psi_2))))$. 
This expression holds in a team if and only if $T(\psi_1)$ or $T(\psi_2)$ hold, that is, by induction hypothesis, if and only if $\psi_1$ or $\psi_2$ do.
\item If $\phi$ is of the form $\psi_1 \wedge \psi_2$, let $T(\phi)$ be $T(\psi_1) \wedge T(\psi_2)$. Then $T(\phi)$ holds if and only if $\psi_1$ and $\psi_2$ do, that is, if and only if $\phi$ does. 
\item If $\phi$ is of the form $\langle \tau \rangle \psi$, let $\tuple v$ be the tuple of all variables occurring in $T(\psi)$, let $R$ be a new $|\tuple v|$-ary relation, and let $T(\phi)$ be $\exists R (U(\tau, R\tuple v) \wedge \forall \tuple v (\lnot R \tuple v \vee T(\psi)))$. Indeed, suppose that $M \models_X T(\phi)$: then for some relation $R$, there exists a $Y$ such that $M \models_{X \rightarrow Y} \tau$ and $M \models_Y R\tuple v$. Furthermore, $M \models \forall \tuple v(\lnot R \tuple v \vee T(\psi))$, and therefore for the set $Y' = \{s : \dom(s) = \tuple v, M \models_s R \tuple v\}$ we have that $M \models_{Y'} T(\psi)$. But then, by downwards closure and locality, $M \models_Y T(\psi)$, and therefore $M \models_X \langle \tau\rangle \psi$. 

Conversely, suppose that $M \models_X \langle \tau \rangle \psi$: then there exists a $Y$ such that $M \models_{X \rightarrow Y} \tau$ and $M \models_Y \psi$. Now let $R$ be $\{s(\tuple v) : s \in Y\}$: clearly $M \models_X U(\tau, R\tuple v)$, since $M \models_Y R \tuple v$, and furthermore $M \models \forall \tuple v (\lnot R \tuple v \vee T(\psi))$, by locality and by the fact that (by induction hypothesis) $M \models_Y T(\psi)$. 
\end{enumerate}
Now let us consider the cases corresponding to transitions: 
\begin{enumerate}
\setcounter{enumi}{4}
\item If $\tau$ is of the form $\exists v$ for some variable $v$, let $U(\tau, \theta)$ be $\exists v \theta$. Indeed, suppose that $M \models_X \exists v \theta$: then $M \models_{X[F/v]} \theta$ for some $F$, and by choosing $Y = X[F/v]$ we have that $M \models_{X \rightarrow Y} \exists v$ and $M \models_Y \theta$, as required. Conversely, suppose that for some $Y$, $M \models_{X \rightarrow Y} \exists v$ and $M \models_Y \theta$: then for some $F$, $X[F/v] \subseteq Y$, and by downwards closure we have that $M \models_{X[F/v]} \theta$.
\item If $\tau$ is of the form $\forall v$ for some variable $v$, let $U(\tau, \theta)$ be $\forall v \theta$. Indeed, suppose that $M \models_X \forall v \theta$: then $M \models_{X[M/v]} \theta$, and if we choose $Y = X[M/v]$ we have at once that $M \models_{X \rightarrow Y} \forall v$ and $M \models_Y \theta$. Conversely, if for some $Y$ $M \models_{X \rightarrow Y} \forall v$ and $M \models_Y \theta$ then $X[M/v] \subseteq Y$ and, by downwards closure, $M \models_{X[M/v]} \theta$. 
\item If $\tau$ is of the form $\phi?$, let $U(\tau, \theta)$ be $T(\phi) \wedge \theta$. Indeed, suppose that $M \models_X T(\phi) \wedge \theta$: then by induction hypothesis $M \models_X \phi$, and, for $Y = X$, we have that $M \models_{X \rightarrow Y} \phi?$. Furthermore, $M \models_Y \theta$, as required. Conversely, suppose that for some $Y$, $M \models_{X \rightarrow Y} \phi?$ and $M \models_Y \theta$. Then $M \models_X \phi$, and therefore $M \models_X T(\phi)$; and furthermore $X \subseteq Y$, and hence by downwards closure $M \models_X \theta$. Hence, $M \models_{X} T(\phi) \wedge \theta$. 
\item If $\tau$ is of the form $\tau_1 \otimes \tau_2$ and $\tuple v$ is the tuple of all free variables of $\theta$ then let $U(\tau, \theta)$ be $\exists R((U(\tau_1, R\tuple v) \vee U(\tau_2, R\tuple v)) \wedge \forall \tuple v (\lnot R \tuple v \vee \theta))$, where $R$ is a new $|\tuple r|$-ary relation symbol. Indeed, suppose that $M \models_X U(\tau, \theta)$: then there exists a relation $R$ and two subteams $X_1$ and $X_2$ of $X$ such that $X = X_1 \cup X_2$, $M \models_{X_1} U(\tau_1, R\tuple v)$ and $M \models_{X_2} U(\tau_2, R\tuple v)$. Hence, there are two teams $Y_1$ and $Y_2$ such that $M \models_{X_1 \rightarrow Y_1} \tau_1$, $M \models_{X_2 \rightarrow Y_2} \tau_2$, $M \models_{Y_1} R \tuple v$ and $M \models_{Y_2} R \tuple v$. Now, let $Y$ be $Y_1 \cup Y_2$: by monotonicity, we have that $M \models_{X_1 \rightarrow Y} \tau_1$ and $M \models_{X_2 \rightarrow Y} \tau_2$, and furthermore $M \models_Y R \tuple v$ too (that is, for all $s \in Y$, $s(\tuple v)$ is in $R$). Since $M \models \forall \tuple v (\lnot R \tuple v \vee \theta)$, this implies that $M \models_Y \theta$, by locality and downwards closure.

Conversely, suppose that there is a $Y$ such that $M \models_{X \rightarrow Y} \tau_1 \otimes \tau_2$ and $M \models_Y \theta$. Then let $R$ be $\{s(\tuple v): s \in Y\}$. Now $X = X_1 \cup X_2$ for two $X_1$ and $X_2$ such that $M \models_{X_1 \rightarrow Y} \tau_1$ and $M \models_{X_2 \rightarrow Y} \tau_2$, and by induction hypothesis we have that $M \models_{X_1} U(\tau_1; R\tuple v)$ and $M \models_{X_2} U(\tau_2; R \tuple v)$. But then $M \models_X U(\tau_1; R \tuple v) \vee U(\tau_2; R \tuple v)$; and furthermore, by locality we have that $M \models \forall \tuple v (\lnot R \tuple v \vee \theta)$. Hence, $M \models_X U(\tau_1 \otimes \tau_2, \theta)$, as required.

\item If $\tau$ is of the form $\tau_1 \cap \tau_2$ and $\tuple v$ is the tuple of all variables of $\theta$ then let $U(\tau, \theta)$ be $\exists \tuple R(U(\tau_1, R \tuple v) \wedge U(\tau_2, R \tuple v) \wedge \forall \tuple v (\lnot R \tuple v \vee \theta))$. Indeed, suppose that $M \models_X U(\tau, \theta)$: then for some relation $R$, by induction hypothesis, there exist teams $Y_1$ and $Y_2$ such that $M \models_{X \rightarrow Y_1} \tau_1$, $M \models_{X \rightarrow Y_2} \tau_2$, $M \models_{Y_1} R \tuple v$ and $M \models_{Y_2} R \tuple v$. Now let $Y$ be $Y_1 \cup Y_2$: as before, by monotonicity we have that $M \models_{X \rightarrow Y} \tau_1$ and $M \models_{X \rightarrow Y} \tau_2$, and hence $M \models_{X \rightarrow Y} \tau_1 \cap \tau_2$. Finally, since $M \models \forall \tuple v (\lnot R \tuple v \vee \theta)$ we have that $M \models_{Y} \theta$, as required. 

Conversely, suppose that there is a $Y$ such that $M \models_{X \rightarrow Y} \tau_1 \cap \tau_2$ and $M \models_Y \theta$. Since $M \models_{X \rightarrow Y} \tau_1 \cap \tau_2$, $M \models_{X \rightarrow Y} \tau_1$ and $M \models_{X \rightarrow Y} \tau_2$. Now let $R$ be $\{s(\tuple v) : s \in Y\}$. By induction hypothesis, $M \models_X U(\tau_1, R \tuple t)$ and $M \models_X U(\tau_2, R \tuple t)$; and furthermore, since $M \models_Y \theta$ we have that $M \models \forall \tuple v (\lnot R \tuple v \vee \theta)$. 
\item If $\tau$ is of the form $\tau_1;\tau_2$ let $U(\tau, \theta)$ be $U(\tau_1, U(\tau_2, \theta))$. Indeed, $M \models_X U(\tau_1, U(\tau_2, \theta))$ if and only if there is a $Y$ such that $M \models_{X \rightarrow Y}$ and $M \models_Y U(\tau_2, \theta)$, that is, if and only if there are a $Y$ and a $Z$ such that $M \models_{X \rightarrow Y} \tau_1$, $M \models_{Y \rightarrow Z} \tau_2$ and $M \models_Z \theta$.
\end{enumerate}
\end{proof}

%\begin{theo}
%For every Transition Dependence Logic formula $\phi$ there exists a Dependence Logic formula $\phi'$ such that 
%\[
%	M \models_X \phi \Leftrightarrow M \models_X \phi'
%\]
%for all first-order models $M$ and teams $X$. 
%\end{theo}
%\begin{proof}
%\textbf{(Sketch)}
%
%Translate $\phi$ into $\Sigma_1^1$, and then apply Theorem \ref{SigmaToDL}.
%\end{proof}
However, in a sense, Transition Dependence Logic allows one to consider subtler distinctions than Dependence Logic does. The formula $\forall x \exists y (=\!\!(y, f(x)) \wedge Pxy)$, for example, could be translated as any of 
\begin{itemize}
\item $\langle \forall x;\exists y\rangle (=\!\!(y, f(x)) \wedge Pxy)$;
\item $\langle \forall x; \exists y\rangle \langle =\!\!(y, f(x))?\rangle Pxy$;
\item $\langle \forall x; \exists y\rangle \langle Pxy?\rangle =\!\!(y, f(x))$;
\item $\langle \forall x; \exists y\rangle \langle (Pxy?) \cap (=\!\!(y, f(x))?) \rangle \top$.
\end{itemize}
The intended interpretations of these formulas are rather different, even though they happen to be satisfied by the same teams: and for this reason, Transition Dependence Logic may be thought of as a proper refinement of Dependence Logic even though it has exactly the same expressive power.
%\section{Dynamic Semantics}
\subsection{Dynamic Predicate Logic}
\label{subsect:DPL}
\emph{Dynamic Semantics} is the name given to a family of semantical frameworks which subscribe to the following principle (\cite{groenendijk91}): 
\begin{quote}
	\emph{The meaning of a sentence does not lie in its truth conditions, but rather in the way it changes (the representation of) the information of the interpreter.}
\end{quote}
In various forms, this intuition can be found prefigured in some of the later work of Ludwig Wittgenstein, as well as in the research of philosophers of language such as Austin, Grice, Searle, Strawson and others (\cite{dekker08}); but its formal development can be traced back to the work of Groenendijk and Stokhof about the proper treatment of pronouns in formal linguistics (\cite{groenendijk91}).
\\

We refer to \cite{dekker08} for a comprehensive analysis of the linguistic issues which caused such a development, as well as for a description of the ways in which this framework was adapted in order to model presuppositions, questions/answers and other phenomena; here we will only present a formulation of \emph{dynamic predicate semantics}, the alternative semantics for first-order logic which was developed in the above mentioned paper by Groenendijk and Stokhof.
\begin{defin}[Dynamic Semantics for First-Order Logic]
Let $\phi$ be a first-order formula, let $M$ be a suitable first-order model and let $s$ and $s'$ be two assignments. Then we say that the transition from $s$ to $s'$ is \emph{allowed} by $\phi$ in $M$, and we write $M \models_{s \rightarrow s'} \phi$, if and only if 
\begin{description}
	\item[DPL-atom: ] $\phi$ is an atomic formula, $s = s'$ and $M \models_s \phi$ in the usual sense; 
	\item[DPL-$\lnot$: ] $\phi$ is of the form $\lnot \psi$, $s=s'$ and for all assignments $h$, $M \not \models_{s \rightarrow h} \psi$; 
	\item[DPL-$\wedge$: ] $\phi$ is of the form $\psi_1 \wedge \psi_2$ and there exists an $h$ such that $M \models_{s \rightarrow h} \psi_1$ and $M \models_{h \rightarrow s'} \psi_2$; 
	\item[DPL-$\vee$: ] $\phi$ is of the form $\psi_1 \vee \psi_2$, $s = s'$ and there exists an $h$ such that $M \models_{s \rightarrow h} \psi_1$ or $M \models_{s \rightarrow h} \psi_2$; 
	\item[DPL-$\rightarrow$: ] $\phi$ is of the form $\psi_1 \rightarrow \psi_2$, $s = s'$ and for all $h$ it holds that 
		\[
			M \models_{s \rightarrow h} \psi_1 \Rightarrow \exists h' \mbox{ s.t. } M \models_{h \rightarrow h'} \psi_2;
		\]
	\item[DPL-$\exists$: ] $\phi$ is of the form $\exists x \psi$ and there exists an element $m \in \domain(M)$ such that $M \models_{s[m/x] \rightarrow s'} \psi$; 
	\item[DPL-$\forall$: ] $\phi$ is of the form $\forall x \psi$, $s = s'$ and for all elements $m \in \domain(M)$ there exists an $h$ such that $M \models_{s[m/x] \rightarrow h} \psi$. 
\end{description}
A formula $\phi$ is \emph{satisfied} by an assignment $s$ if and only if there exists an assignment $s'$ such that $M \models_{s \rightarrow s'} \phi$; in this case, we will write $M \models_s \phi$. 
\end{defin}
We will discuss neither the formal properties of this formalism nor its linguistic applications here. All that is relevant for our purposes is that, according to it, formulas are interpreted as \emph{transitions} from assignments to assignments, and furthermore that the rule for conjunction allows us to bind occurrences of a variable of the second conjunct to quantifiers occurring in the first one.\footnote{For example, consider the formula $(\exists x Px) \wedge Qx$: by the rules given, it is easy to see that $M \models_s  (\exists x Px) \wedge Qx$ if and only if $P^M \cap Q^M \not = \emptyset$, that is, if and only if $M \models_s \exists x (Px \wedge Qx)$, differently from the case of Tarski's semantics.}

The similarity between this semantics and our semantics for transition terms should be evident. Hence, it seems natural to ask whether we can adopt, for a suitable variant of Dependence Logic, the following variant of Groenendijk and Stokhof's motto: \\

\begin{quote}
	\emph{The meaning of a formula does not lie in its satisfaction conditions, but rather in the team transitions it allows.}
\end{quote}

From this point of view, \emph{transition terms} are the fundamental objects of our syntax, and formulas can be removed altogether from the language -- although, of course, the tests corresponding to literals and dependence formulas should still be available. As in Groenendijk and Stokhof's logic, satisfaction becomes then a derived concept: in brief, a team $X$ can be said to satisfy a term $\tau$ if and only if there exists a $Y$ such that $\tau$ allows the transition from $X$ to $Y$, or, in other words, if and only if \emph{some} set of non-losing outcomes can be reached from the set $X$ of initial positions in the game corresponding to $\tau$.

In the next section, we will make use of these intuitions to develop another, terser version of Dependence Logic; and finally, we will discuss some implications of this new version for the further developments and for the possible applications of this interesting logical formalism.
\subsection{Dynamic Dependence Logic}
\label{subsect:DDL}
We will now develop a formula-free variant of Transition Dependence Logic, along the lines of Groenendijk and Stockhof's Dynamic Predicate Logic. 
\begin{defin}[Dynamic Dependence Logic - Syntax]
Let $\Sigma$ be a first-order signature. The set of all formulas of Dynamic Dependence Logic over $\Sigma$ is given by the rules 
\[
	\tau ::= R \tuple t ~|~ \lnot R \tuple t ~|~ =\!\!(t_1, \ldots, t_n) ~|~ \exists v ~|~ \forall v ~|~ \tau \otimes \tau ~|~ \tau \cap \tau ~|~ \tau ; \tau
\]
where, as usual, $R$ ranges over all relation symbols of our signature, $\tuple t$ ranges over all tuples of terms of the required lengths, $n$ ranges over $\mathbb N$, $t_1 \ldots t_n$ range over all terms, and $v$ ranges over $\var$. 
\end{defin}
The semantical rules associated to this language are precisely as one would expect: 
\begin{defin}[Dynamic Dependence Logic - Semantics]
\label{DDL-TTS}
Let $M$ be a first-order model, let $\tau$ be a Dynamic Dependence Logic formula over the signature of $M$, and let $X$ and $Y$ be two teams over $M$ with domain $\var$. Then we say that $\tau$ \emph{allows} the transition $X \rightarrow Y$ in $M$, and we write $M \models_{X \rightarrow Y} \tau$, if and only if 
\begin{description}
\item[DDL-lit: ] $\tau$ is a first-order literal, $M \models_s \tau$ in the usual first-order sense for all $s \in X$, and $X \subseteq Y$; 
\item[DDL-dep: ] $\tau$ is a dependence atom $=\!\!(t_1, \ldots, t_n)$, $X \subseteq Y$, and any two assignments $s, s' \in X$ which coincide over $t_1 \ldots t_{n-1}$ also coincide over $t_n$;
\item[DDL-$\exists$: ] $\tau$ is of the form $\exists v$ for some $v \in \var$, and $X[F/v] \subseteq Y$ for some $F: X \rightarrow \domain(M)$; 
\item[DDL-$\forall$: ] $\tau$ is of the form $\forall v$ for some $v \in \var$, and $X[M/v] \subseteq Y$; 
\item[DDL-$\otimes$: ] $\tau$ is of the form $\tau_1 \otimes \tau_2$ and $X = X_1 \cup X_2$ for two teams $X_1$ and $X_2$ such that $M \models_{X_1 \rightarrow Y} \tau_1$ and $M \models_{X_2 \rightarrow Y} \tau_2$;
\item[DDL-$\cap$: ] $\tau$ is of the form $\tau_1 \cap \tau_2$, $M \models_{X \rightarrow Y} \tau_1$ and $M \models_{X \rightarrow Y} \tau_2$; 
\item[DDL-concat: ] $\tau$ is of the form $\tau_1;\tau_2$, and there exists a $Z$ such that $M \models_{X \rightarrow Z} \tau_1$ and $M \models_{Z \rightarrow Y} \tau_2$. 
\end{description}
A formula $\tau$ is said to be \emph{satisfied} by a team $X$ in a model $M$ if and only if there exists a $Y$ such that $M \models_{X \rightarrow Y} \tau$; and if this is the case, we will write $M \models_X \tau$. 
\end{defin}

It is not difficult to see that Dynamic Dependence Logic is equivalent to Transition Dependence Logic (and, therefore, to Dependence Logic).
\begin{propo}
Let $\phi$ be a Dependence Logic formula. Then there exists a Dynamic Dependence Logic formula $\phi'$ which is equivalent to it, in the sense that
\[
	M \models_X \phi \Leftrightarrow M \models_X \phi' \Leftrightarrow \exists Y \mbox{ s.t. } M \models_{X \rightarrow Y} \phi'
\]
for all suitable teams $X$ and models $M$
\end{propo}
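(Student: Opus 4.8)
The plan is to proceed by structural induction on the Dependence Logic formula $\phi$, exhibiting an explicit translation $\phi \mapsto \phi'$ that mirrors the one used to prove Transition Dependence Logic equivalent to Dependence Logic, but with the test operator $?$ and the diamond $\langle\cdot\rangle$ stripped away: in Dynamic Dependence Logic, literals and dependence atoms are already transition terms, and satisfaction is by definition the existence of a reachable team. Concretely, I would set $\phi' = \phi$ when $\phi$ is a first-order literal or a dependence atom $=\!\!(t_1,\ldots,t_n)$; $\phi' = \psi_1' \otimes \psi_2'$ when $\phi = \psi_1 \vee \psi_2$; $\phi' = \psi_1' \cap \psi_2'$ when $\phi = \psi_1 \wedge \psi_2$; $\phi' = (\exists v);\psi'$ when $\phi = \exists v\,\psi$; and $\phi' = (\forall v);\psi'$ when $\phi = \forall v\,\psi$. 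Since $M \models_X \tau$ is defined in Dynamic Dependence Logic to mean exactly that there is a $Y$ with $M \models_{X \rightarrow Y} \tau$, the second and third equivalences in the statement coincide by definition; so it suffices to prove $M \models_X \phi$ (in the sense of Definition \ref{DL-TS}) iff there exists $Y$ with $M \models_{X \rightarrow Y} \phi'$.

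Before the induction I would record the routine structural properties of Dynamic Dependence Logic, proved by an easy induction on $\tau$ exactly as in the corresponding theorem for Transition Dependence Logic: downwards closure ($M \models_{X \rightarrow Y}\tau$ and $Z \subseteq X$ imply $M \models_{Z \rightarrow Y}\tau$), monotonicity ($M \models_{X \rightarrow Y}\tau$ and $Y \subseteq Z$ imply $M \models_{X \rightarrow Z}\tau$), non-creation ($M \models_{\emptyset \rightarrow Y}\tau$), and non-triviality. Monotonicity and downwards closure are what make the inductive step work, so establishing them first is the right move.

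For the induction itself: in the literal / dependence-atom case, taking $Y = X$ gives one direction, and the converse is immediate since the side conditions of DDL-lit and DDL-dep are imposed directly on $X$. For disjunction, if $M \models_X \psi_1 \vee \psi_2$ then $X = X_1 \cup X_2$ with $M \models_{X_i}\psi_i$; by the induction hypothesis choose $Z_i$ with $M \models_{X_i \rightarrow Z_i}\psi_i'$, put $Z = Z_1 \cup Z_2$, apply monotonicity to get $M \models_{X_i \rightarrow Z}\psi_i'$, and conclude $M \models_{X \rightarrow Z}\psi_1' \otimes \psi_2'$; conversely a witness $Z$ for $\psi_1' \otimes \psi_2'$ splits $X$ into $X_1,X_2$ with $M \models_{X_i \rightarrow Z}\psi_i'$, and the induction hypothesis returns $M \models_{X_i}\psi_i$, hence $M \models_X \psi_1 \vee \psi_2$. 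Conjunction is analogous, this time merging the two witnesses for $\psi_1'$ and $\psi_2'$ over the common $X$ via monotonicity and invoking the $\cap$ clause. For $\exists v\,\psi$, from $M \models_{X[F/v]}\psi$ the induction hypothesis gives $Z$ with $M \models_{X[F/v] \rightarrow Z}\psi'$, and since $M \models_{X \rightarrow X[F/v]}\exists v$ (take the target equal to $X[F/v]$) we get $M \models_{X \rightarrow Z}(\exists v);\psi'$; conversely, any witness $(W,Z)$ for the concatenation has $X[F/v] \subseteq W$ for some $F$, so downwards closure yields $M \models_{X[F/v] \rightarrow Z}\psi'$ and the induction hypothesis closes the loop. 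The $\forall v\,\psi$ case is identical with $X[M/v]$ in place of $X[F/v]$ and the clause DDL-$\forall$.

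I do not expect a serious obstacle here; the only point requiring care is the disjunction / $\otimes$ step, where Dependence Logic lets the two subteams satisfy $\psi_1$ and $\psi_2$ independently while the tensor of Dynamic Dependence Logic demands that both branches reach a \emph{common} target team. This mismatch is absorbed by monotonicity — replace the two targets by their union — which is precisely why I would set up the structural properties of Dynamic Dependence Logic before running the main induction.
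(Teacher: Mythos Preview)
Your proposal is correct and follows exactly the same translation as the paper: the paper's proof simply lists the five clauses $\phi' = \phi$ for atoms, $\psi_1' \otimes \psi_2'$ for $\vee$, $\psi_1' \cap \psi_2'$ for $\wedge$, $\exists x;\psi'$ for $\exists x\psi$, and $\forall x;\psi'$ for $\forall x\psi$, without spelling out the verification. Your write-up is more careful in that you actually check each inductive case and isolate the use of monotonicity in the $\otimes$ and $\cap$ steps, which the paper leaves implicit.
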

\begin{proof}
We build $\phi'$ by structural induction:
\begin{enumerate}
\item If $\phi$ is a literal or a dependence atom then $\phi' = \phi$; 
\item If $\phi$ is $\psi_1 \vee \psi_2$ then $\phi' = \psi_1' \otimes \psi_2'$; 
\item If $\phi$ is $\psi_1 \wedge \psi_2$ then $\phi' = \psi_1' \cap \psi_2'$; 
\item If $\phi$ is $\exists x \psi$ then $\phi' = \exists x ; \psi'$; 
\item If $\phi$ is $\forall x \psi$ then $\phi' = \forall x ; \psi'$.
\end{enumerate}
\end{proof}
\begin{propo}
Let $\tau$ be a Dynamic Dependence Logic formula. Then there exists a Transition Dependence Logic transition term $\tau'$ such that 
\[
	M \models_{X \rightarrow Y} \tau \Leftrightarrow M \models_{X \rightarrow Y} \tau'
\]
for all suitable $X$, $Y$ and $M$, and such that hence 
\[
	M \models_X \tau \Leftrightarrow M \models_X \langle \tau'\rangle \top.
\]
\end{propo}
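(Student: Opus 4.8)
The plan is to define $\tau'$ by structural induction on the Dynamic Dependence Logic formula $\tau$, exploiting the fact that the two calculi differ only in their treatment of literals and dependence atoms: in Dynamic Dependence Logic these are themselves transition terms, whereas in Transition Dependence Logic they are \emph{formulas} that become transition terms only after an application of the test operator. Accordingly, for $\tau$ a first-order literal or a dependence atom $\alpha$ I set $\tau' = \alpha?$; for the atomic transition terms I put $(\exists v)' = \exists v$ and $(\forall v)' = \forall v$; and for the compound terms I let $(\tau_1 \otimes \tau_2)' = \tau_1' \otimes \tau_2'$, $(\tau_1 \cap \tau_2)' = \tau_1' \cap \tau_2'$ and $(\tau_1 ; \tau_2)' = \tau_1' ; \tau_2'$.

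The verification that $M \models_{X \rightarrow Y} \tau \Leftrightarrow M \models_{X \rightarrow Y} \tau'$ then proceeds by the same induction. In the base case, the clause \textbf{DDL-lit} (resp.\ \textbf{DDL-dep}) says that $M \models_{X \rightarrow Y} \alpha$ iff $X \subseteq Y$ and the first-order literal $\alpha$ holds at every $s \in X$ (resp.\ the dependence-atom condition holds on $X$); comparing this with the Transition Dependence Logic clauses \textbf{TDL-test}, \textbf{TDL-lit} and \textbf{TDL-dep}, we see that $M \models_{X \rightarrow Y} \alpha?$ holds under exactly the same condition, since satisfaction of the \emph{formula} $\alpha$ by $X$ is by definition the very same condition on $X$. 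For $\exists v$ and $\forall v$ the clauses \textbf{DDL-$\exists$}, \textbf{DDL-$\forall$} and \textbf{TDL-$\exists$}, \textbf{TDL-$\forall$} are literally identical, so there is nothing to prove. For $\otimes$, $\cap$ and $;$ the clauses again coincide verbatim, and the equivalence follows by applying the induction hypothesis to the immediate subterms (for $\tau_1;\tau_2$, say, one uses the witnessing intermediate team $Z$ unchanged).

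Finally, the displayed consequence is immediate. By the definition of satisfaction in Dynamic Dependence Logic, $M \models_X \tau$ means that there is some $Y$ with $M \models_{X \rightarrow Y} \tau$, hence with $M \models_{X \rightarrow Y} \tau'$ by what we have just proved; and since $M \models_Y \top$ holds for every team $Y$ by \textbf{TL-$\top$}, the clause \textbf{TL-$\diamond$} gives $M \models_X \langle \tau' \rangle \top$. Conversely, $M \models_X \langle \tau' \rangle \top$ unfolds via \textbf{TL-$\diamond$} to the existence of a $Y$ with $M \models_{X \rightarrow Y} \tau'$, hence $M \models_{X \rightarrow Y} \tau$, so $M \models_X \tau$.

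As for the main obstacle: there is none of substance — the result is essentially a syntactic rephrasing, and the only point requiring any care is confirming that the semantics of the test operator $\alpha?$ in Transition Dependence Logic recovers exactly the atomic-transition clauses of Dynamic Dependence Logic, which is a direct comparison of the two definitions. One should also note that the induction is carried out over transition terms only, since Dynamic Dependence Logic has no separate formula layer, so (unlike in the proof of the preceding theorem) no mutual induction with a formula case is needed.
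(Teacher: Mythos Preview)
Your proposal is correct and follows essentially the same approach as the paper: both define $\tau'$ by the identical structural induction (literals and dependence atoms become tests, quantifier atoms are unchanged, and $\otimes$, $\cap$, $;$ are translated homomorphically). Your write-up is in fact more thorough than the paper's, which merely states the inductive construction without spelling out the verification; one cosmetic point is that the clauses you cite as \textbf{TL-$\top$} and \textbf{TL-$\diamond$} should be their Transition Dependence Logic counterparts (the paper uses $\top$ informally there, as it is not part of the TDL syntax).
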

\begin{proof}
Build $\tau'$ by structural induction: 
\begin{enumerate}
\item If $\tau$ is a literal or dependence atom then $\tau' = \tau?$; 
\item If $\tau$ is of the form $\exists v$ or $\forall v$ then $\tau' = \tau$;
\item If $\tau$ is of the form $\tau_1 \otimes \tau_2$ then $\tau' = \tau_1' \otimes \tau_2'$; 
\item If $\tau$ is of the form $\tau_1 \cap \tau_2$ then $\tau' = \tau_1' \cap \tau_2'$; 
\item If $\tau$ is of the form $\tau_1;\tau_2$ then $\tau' = \tau_1';\tau_2'$.
\end{enumerate}
\end{proof}
\begin{coro}
Dynamic Dependence Logic is equivalent to Transition Dependence Logic and to Dependence Logic
\end{coro}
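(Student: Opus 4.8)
The plan is to assemble the corollary from the chain of translations already established, reading each of the three formalisms through the single yardstick of the team-satisfaction relation $M \models_X \cdot$, where for a transition term $\tau$ this is taken to mean $M \models_X \langle \tau \rangle \top$ (equivalently, $\exists Y$ with $M \models_{X \rightarrow Y} \tau$), in the spirit of Definition~\ref{DDL-TTS}.

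First I would record the two directions supplied by the two preceding propositions. The first gives, for every Dependence Logic formula $\phi$, a Dynamic Dependence Logic formula $\phi'$ with $M \models_X \phi \Leftrightarrow M \models_X \phi'$ for all $M$ and $X$; hence Dependence Logic embeds into Dynamic Dependence Logic. The second gives, for every Dynamic Dependence Logic formula $\tau$, a Transition Dependence Logic transition term $\tau'$ with $M \models_{X \rightarrow Y} \tau \Leftrightarrow M \models_{X \rightarrow Y} \tau'$, and therefore $M \models_X \tau \Leftrightarrow M \models_X \langle \tau' \rangle \top$; hence Dynamic Dependence Logic embeds into Transition Dependence Logic.

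Next I would close the loop using the two representation theorems of Subsection~\ref{subsect:TDL} relating Transition Dependence Logic and Dependence Logic: every Dependence Logic formula $\phi$ has a Transition Dependence Logic transition term $\tau_\phi$ with $M \models_X \phi \Leftrightarrow M \models_X \langle \tau_\phi \rangle \top$, and every Transition Dependence Logic formula $\psi$ has a Dependence Logic formula $T(\psi)$ with $M \models_X \psi \Leftrightarrow M \models_X T(\psi)$. Composing all of these yields a cycle of embeddings, Dependence Logic into Dynamic Dependence Logic into Transition Dependence Logic into Dependence Logic, each preserving which teams satisfy a formula in a given model; the stated equivalence of the three systems is then immediate.

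The argument is thus a purely formal composition of translations already proved, with no further induction required. The only point needing a little care — and the closest thing to an obstacle — is the bookkeeping about the meaning of ``equivalent'' across the three systems: Transition Dependence Logic and Dynamic Dependence Logic assign to their terms a transition relation between teams, whereas base Dependence Logic has only a satisfaction relation, so the comparison must be phrased uniformly in terms of $M \models_X \cdot$ as above. Once this convention is fixed, no additional work is needed beyond citing the two preceding propositions and the two representation theorems.
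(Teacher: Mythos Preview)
Your proposal is correct and matches the paper's own proof, which simply states that the corollary follows from the two preceding propositions together with the already-established equivalence between Dependence Logic and Transition Dependence Logic. Your version just spells out the cycle of embeddings and the satisfaction convention more explicitly than the paper does.
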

\begin{proof}
Follows from the two previous results and from the equivalence between Dependence Logic and Transition Dependence Logic.
\end{proof}
\section{Further Work}
In this work, we established a connection between a variant of Dynamic Game Logic and Dependence Logic, and we used it as the basis for the development of variants of Dependence Logic in which it is possible to talk directly about transitions from teams to teams. This suggests a new perspective on Dependence Logic and Team Semantics, one which allow us to study them as a special kind of \emph{algebras of nondeterministic transitions between relations}. One of the main problems that is now open is whether it is possible to axiomatize these algebras, in the same sense in which, in \cite{mann09}, Allen Mann offers an axiomatization of the algebra of trumps corresponding to IF Logic (or, equivalently, to Dependence Logic). 

Furthermore, we might want to consider different choices of connectives, like for example ones related to the theory of database transactions. The investigation of the relationships between the resulting formalisms is a natural continuation of the currently ongoing work on the study of the relationship between various extensions of Dependence Logic, and promises of being of great utility for the further development of this fascinating line of research. 
\section{Acknowledgements}
The author wishes to thank Johan van Benthem and Jouko V\"a\"an\"anen for a number of useful suggestions and insights. Furthermore, he wishes to thank the reviewers for a number of highly useful suggestions and comments. 
\bibliographystyle{plain}

\end{document}